\def\altdb{\vadjust{\vbox to 0pt{\vss\hbox{\kern \hsize
\quad{\dbend}}\kern\baselineskip\kern-10pt}}}
\newcommand\sset[1]{\{#1\}}
\newcommand\set[1]{\{\,#1\,\}}
\newcommand\Iso{\operatorname{Iso}}
\newcommand\go{G^{(0)}}
\newcommand\restr[1]{|_{#1}}
\newcommand\cs{\ensuremath{C^{*}}}
\newcommand\cc{C_{c}}
\DeclareMathOperator{\supp}{supp}
\DeclareMathOperator{\ev}{ev}
\DeclareMathOperator{\lsp}{span}
\DeclareMathOperator{\clsp}{\overline{\lsp}}
\newcommand{\inv}{^{-1}}
\newtheorem{thm}{Theorem}[section]
\newtheorem{lemma}[thm]{Lemma}
\newtheorem{prop}[thm]{Proposition}
\newtheorem{cor}[thm]{Corollary}
\theoremstyle{definition}
\theoremstyle{remark}
\newtheorem{remark}[thm]{Remark}
\newtheorem{example}[thm]{Example}
\newtheorem{mycomment}{Comment}
{\end{mycomment}\endgroup}
\def\labelenumi{\textnormal{(\@alph\c@enumi)}}
\def\theenumi{\@alph \c@enumi}
\def\labelenumii{\textnormal{(\@roman\c@enumii)}}
\def\theenumii{\@roman \c@enumii}
\def\alphapart#1{\charno=96
\advance\charno by#1\char\charno}
\numberwithin{equation}{section}
\def\timeofday{%    Must be computed when called if preloaded
\hours=\time
\minutes=\hours
\divide\hours by60
\multiply\hours by60
\advance\minutes by-\hours
\divide\hours by60
\ifnum\hours>9\else0\fi\the\hours:\ifnum\minutes>9\else
0\fi\the\minutes}
\def\predate{\date{\color{red}\bfseries \the\day\ \ifcase\month\or
  January\or February\or March\or April\or May\or June\or July\or
        August\or September\or October\or November\or
           December\fi\ \the\year\ --- \timeofday}}
\let\phi\varphi
\def\intiso(#1){\Iso(#1)^{\circ}}
\newcommand\Bb{\mathscr{B}}
\newcommand\N{\mathbf{N}}
\newcommand\T{\mathbf{T}}
\newcommand\Z{\mathbf{Z}}
\DeclareMathOperator{\Aut}{Aut}
\newcommand{\Hh}{\mathcal{H}}
\renewcommand\MR[1]{\relax}
\definecolor{Dgreen}{cmyk}{0.93,0.33,0.92,0.25} %% Dartmouth Green!
\begin{document}
\title[Cartan subalgebras of groupoid \cs-algebras]%
{\boldmath Cartan subalgebras in \cs-algebras of Hausdorff \'etale groupoids}

\author[J.H. Brown]{Jonathan H. Brown}
\address[J.H. Brown]{
Department of Mathematics\\
University of Dayton\\
300 College Park Dayton\\
OH 45469-2316 USA} \email{jonathan.henry.brown@gmail.com}

\author[G. Nagy]{Gabriel Nagy}
\address[G. Nagy and S. Reznikoff]{
Department of Mathematics\\
Kansas State University\\
138 Cardwell Hall\\
Manhattan, KS\\
U.S.A.} \email{nagy@math.ksu.edu}

\author[S. Reznikoff]{Sarah Reznikoff}
\email{sarahrez@math.ksu.edu}

\author[A. Sims]{Aidan Sims}
\address[A. Sims]{School of Mathematics and Applied Statistics\\
University of Wollongong \\
NSW 2522\\
Australia} \email{asims@uow.edu.au}

\author[D.P. Williams]{Dana P. Williams}
\address[D.P. Williams]{Department of Mathematics \\ Dartmouth College
  \\ Hanover, NH 03755-3551}

\email{dana.williams@Dartmouth.edu}

\subjclass[2010]{46L05 (primary)}

\keywords{$C^*$-algebra; groupoid; maximal abelian subalgebra; Cartan
  subalgebra}

\thanks{We thank Astrid an Huef and Lisa Orloff Clark for helpful
  conversations.  We are grateful to Alex Kumjian for pointing out an
  error in an earlier version of the manuscript. This research was
  supported by the Australian Research Council, the Edward Shapiro
  fund at Dartmouth College, the Simons Foundation, and the National
  Science Foundation grant number DMS-1201564}

\date{\today}

\begin{abstract}
  The reduced $C^*$-algebra of the interior of the isotropy in any
  Hausdorff \'etale groupoid $G$ embeds as a $C^*$-subalgebra $M$ of
  the reduced $C^*$-algebra of $G$. We prove that the set of pure
  states of $M$ with unique extension is dense, and deduce that any
  representation of the reduced $C^*$-algebra of $G$ that is injective
  on $M$ is faithful. We prove that there is a conditional expectation
  from the reduced $C^*$-algebra of $G$ onto $M$ if and only if the
  interior of the isotropy in $G$ is closed. Using this, we prove that
  when the interior of the isotropy is abelian and closed, $M$ is a
  Cartan subalgebra. We prove that for a large class of groupoids $G$
  with abelian isotropy---including all Deaconu--Renault groupoids
  associated to discrete abelian groups---$M$ is a maximal abelian
  subalgebra. In the specific case of $k$-graph groupoids, we deduce
  that $M$ is always maximal abelian, but show by example that it is
  not always Cartan.
\end{abstract}

\maketitle

%\tableofcontents

\section{Introduction}
\label{sec:introduction}

A key tool in the study of graph $C^*$-algebras and their analogues is
the Cuntz--Krieger uniqueness theorem \cite{cunkri:im80,
  kumpasraeren:jfa97}. This result says that if all the cycles in a
graph $E$ have an entrance, then any representation of the associated
$C^*$-algebra which is nonzero on all of the generating projections
associated to vertices of the graph is faithful. There are
numerous ways to prove this theorem.  But the key to the argument in
each case is showing that any element of the graph $C^*$-algebra can
be compressed to an element close to its canonical abelian subalgebra,
and this process is faithful on positive elements.  This compression
property is reminiscent of Anderson's study \cite{and:tams79} of the
state-extension property for inclusions of $C^*$-algebras.

In the case of graph algebras, the condition on a graph that every
cycle has an entrance is equivalent to the condition that the
associated groupoid is topologically principal. (It is worth pointing
out that this is in fact how the Cuntz--Krieger uniqueness theorem was
originally proved \cite[Theorem~3.7]{kumpasrae:pjm98}.) It follows
from Renault's work in \cite{ren:imsb08} that this in turn is
equivalent to the condition that there is a dense set of units of the
groupoid $G$ for which the associated pure state of $C(\go)$ has
unique extension to $C^*_r(G)$. It also follows from Renault's work
that when $G$ is topologically principal $C_0(\go)$ is a maximal
abelian subalgebra---indeed, a Cartan subalgebra---of $C^*(G)$.

This analysis fails if $E$ contains cycles with no
entrance. Szyma\'nski showed in \cite{szy:ijm02} that to verify
faithfulness of a representation $\pi$ of $C^*(E)$, in addition to
checking that each $\pi(p_v)$ is nonzero, one must check that
$\pi(s_\mu)$ has full spectrum for every cycle $\mu$ with no
entrance. The second and third authors systematised and generalised
Szyma\'nski's analysis in \cite{nagrez:jlms12, nagrez:pams14}. They
introduced the notion of a pseudo-diagonal $M$ of a $C^*$-algebra $A$
and showed that representations of $A$ that are faithful on $M$ are
automatically faithful on $A$. They then showed that the subalgebra of
a graph $C^*$-algebra generated by the usual abelian subalgebra and
the elements $\{s_\mu : \mu\text{ is a cycle with no entrance}\}$ is a
pseudo-diagonal, recovering Szyma\'nski's result.

The first three authors considered the extension of this analysis to
$C^*$-algebras of higher-rank graphs in \cite{bnr:jfa14}. They
considered an abelian subalgebra $M$ of the $k$-graph algebra
$C^*(\Lambda)$ spanned by partial unitaries of the form
$s_\mu s^*_\nu$, and identified it as the completion in the associated
groupoid $C^*$-algebra of the functions supported on the interior of
its isotropy. By careful analysis of the set of states of $M$ with
unique extension to $C^*(\Lambda)$, they proved that every
representation of $C^*(\Lambda)$ that is injective on $M$ is faithful,
without proving that $M$ was either maximal abelian or the range of a
faithful conditional expectation from $C^*(\Lambda)$. They left open
the natural question as to whether $M$ is in fact a pseudo-diagonal in
the sense of \cite{nagrez:pams14}.

Here we answer a more general question about a canonical subalgebra of
the reduced $C^*$-algebra of a Hausdorff \'etale groupoid $G$. The
reduced $C^*$-algebra of the interior $\intiso(G)$ of the isotropy in
$G$ embeds as a subalgebra $M_r$ of $C^*_r(G)$. We show that the set
of pure states of $M_r$ with unique extension to $C_r^*(G)$ is dense
in the set of all pure states of $M_r$. We conclude from this that any
representation of $C^*_r(G)$ that is injective on $M_r$ is
faithful. This generalises the result from \cite{bnr:jfa14} discussed
in the preceding paragraph. Along the way we show that commutativity
of the subalgebra can be dropped from the hypotheses of the abstract
uniqueness theorem of \cite{bnr:jfa14}.

We then turn our attention to deciding when $M_r$ is a Cartan
subalgebra in the sense of \cite{ren:imsb08}, and when it is a
pseudodiagonal in the sense of \cite{nagrez:pams14}.  It turns out
that the two are equivalent and hold precisely when there is a
conditional expectation $\Psi : C^*_r(G) \to M_r$, and $M_r$ is
maximal abelian. We prove that if $\intiso(G)$ is closed in $G$, then
the map $f \mapsto f|_{\intiso(G)}$ from $C_c(G)$ to $C_c(\intiso(G))$
extends to a faithful conditional expectation
$\Psi : C^*_r(G) \to M_r$; and conversely, if $\intiso(G)$ is not
closed in $G$ then there does not exist a faithful conditional
expectation on $C^*_r(G)$ with range $M_r$.

To address the question of when $M_r$ is maximal abelian, we restrict
our attention to groupoids $G$ such that $\intiso(G)$ is abelian (that
is, a bundle of abelian groups).  Renault shows in
\cite{ren:groupoid}*{Proposition~II.4.2(i)} that the identity map on
$C_c(G)$ extends to a continuous injection $j : C^*_r(G) \to
C_0(G)$.
We show that $M_r$ is maximal abelian if and only if it is equal to
the set of elements $a \in C^*_r(G)$ such that
$j(a) \in C_0(\intiso(G))$. We then establish two sufficient
conditions under which $M_r$ is maximal abelian: (a) that the interior
of the isotropy is closed; or (b) that there is a continuous 1-cocycle
from $G$ into an abelian group $H$ that is injective on each fibre of
the isotropy of $G$. It follows from this and our previous result that
whenever $\intiso(G)$ is abelian and closed, $M_r$ is a Cartan
subalgebra and a pseudodiagonal. It also follows that if $G$ is a
Deaconu--Renault groupoid associated to an action of a discrete
abelian group, and if the interior of the isotropy is not closed in
$G$, then $M_r \subseteq C^*_r(G)$ is a maximal abelian subalgebra of
$C^*_r(G)$ but is not Cartan or a pseudodiagonal because it is not the
range of a conditional expectation.

Specialising to $k$-graphs we are able to answer the questions left
open in \cite{nagrez:pams14}: given a $k$-graph $\Lambda$, the
subalgebra $M$ of $C^*(\Lambda)$ described in \cite{nagrez:pams14} is
always a maximal abelian subalgebra, but is a Cartan subalgebra only
under the additional hypothesis that the interior of the isotropy in
the $k$-graph groupoid is closed. We prove by example that the latter
is not automatic. We also tie up a loose end by proving that if $D$ is
a Cartan subalgebra of any $C^*$-algebra $A$, then $D$ is a
pseudodiagonal as well.

\smallskip

The paper is organised as follows. After a short preliminaries section
to establish notation, we break our analysis up into two sections. In
Section~\ref{sec:abstractuniqueness} we prove our main uniqueness
result about the reduced $C^*$-algebra of a Hausdorff \'etale groupoid
in terms of the subalgebra $M_r$ corresponding to the interior of its
isotropy. The results in this section do not require $\intiso(G)$ to
be abelian or closed. We have tried to be explicit about which parts
of our results apply to full $C^*$-algebras, and in particular what
additional consequences follow from amenability of $G$ or of
$\intiso(G)$.

Section~\ref{sec:abelian isotropy} then deals with the questions of
when there is a conditional expectation of $C^*_r(G)$ onto $M_r$, and
when $M_r$ is maximal abelian. We prove that $C^*_r(G)$ admits an
expectation onto $M_r$ if and only if $\intiso(G)$ is closed in
Proposition~\ref{prp:M FCE}. We then restrict to the special case
where $\intiso(G)$ is abelian and hence also amenable by results of
\cite{ren:xx13}. We establish our sufficient conditions for $M_r$ to
be maximal abelian in Theorem~\ref{thm:masa}. We also discuss the
consequences of our results for higher-rank-graph $C^*$-algebras, and
provide an example of a $2$-graph for which the interior of the
isotropy in the associated groupoid is not closed. We finish the
section by proving that every Cartan subalgebra is a pseudo-diagonal.

\section{Preliminaries}
\label{sec:preliminaries}

Throughout this paper, $G$ will denote a locally compact
second-countable Hausdorff groupoid which is \'etale in the sense that
$r,s : G \to \go$ are local homeomorphisms.  For subsets
$A, B \subset G$, we write
\begin{equation*}
  AB:=\sset{\alpha\beta\in G:(\alpha,\beta)\in (A\times B)\cap G^{(2)}}.
\end{equation*}
We use the standard groupoid conventions that $G^u = r^{-1}(u)$,
$G_u = s^{-1}(u)$, and $G^u_u = G^u \cap G_u$ for $u \in \go$. For
$K\subset \go$, the restriction of $G$ to $K$ is the subgroupoid
$G\restr K=\sset{\gamma\in G:r(\gamma),s(\gamma)\in K}$. We will be
particularly interested in the \emph{isotropy subgroupoid}
\begin{equation*}
  \Iso(G)=\set{\gamma\in G:r(\gamma)=s(\gamma)}=\bigcup_{u\in\go} G^u_u.
\end{equation*}
Note that $\Iso(G)$ is closed in $G$ as well as a group bundle over
$\go$.

The $I$-norm on $\cc(G)$ is defined by
\[
\|f\|_I = \sup_{u \in \go} \max\big\{\sum_{\gamma \in G_u}
|f(\gamma)|, \sum_{\gamma \in G^u} |f(\gamma)|\big\}.
\]
The groupoid $C^*$-algebra $\cs(G)$ is the completion of $\cc(G)$ in
the norm $\|a\| = \sup\{\pi(a) : \pi\text{ is an $I$-norm bounded
  $*$-representation}\}$. For $u \in \go$ there is a representation
$L^u : \cs(G) \to B(\ell^2(G_u))$ given by
$L^u(f)\delta_\gamma = \sum_{s(\alpha) = r(\gamma)}
f(\alpha)\delta_{\alpha\gamma}$.
This is called the (left-)regular representation associated to
$u$. The reduced groupoid $C^*$-algebra $\cs_r(G)$ is the image of
$\cs(G)$ under $\bigoplus_{u \in \go} L^u$.

A \emph{bisection} in $G$, also known as a $G$-set, is a set
$U \subset G$ such that $r,s$ restrict to homeomorphisms on $U$. An
important feature of \'etale groupoids is that they have plenty of
open bisections: Proposition~3.5 of \cite{exe:bbms08} together with
local compactness implies that the topology on an \'etale groupoid has
a basis consisting of precompact open bisections.

Because $G$ is \'etale, there is a homomorphism
$C_0(\go) \hookrightarrow \cs(G)$ implemented on $\cc(G)$ by extension
of functions by $0$. We regard $C_0(\go)$ as a $*$-subalgebra of
$\cs(G)$.  Since $\go$ is closed, $f|_{\go}\in C_c(\go)$ for all
$f\in C_c(G)$.  This map extends to a faithful conditional expectation
$\Phi_r: C^*_r(G)\to C_0(\go)$ and a (not necessarily faithful)
conditional expectation $\Phi: C^*(G)\to C_0(\go)$ (this is proved for
principal groupoids in the final sentence of the proof of
\cite{ren:groupoid}*{Proposition~II.4.8}, and the same proof applies
for non-principal groupoids).

We write $\intiso(G)$ for the interior of $\Iso(G)$ in $G$.  Since $G$
is \'etale, $\go \subset \intiso(G)$ and $\intiso(G)$ is an open
\'etale subgroupoid of $G$. We will need the following consequence of
\cite[Proposition~2.5]{simwil:xx15}.

\begin{lemma}[{\cite[Proposition~2.5(b)~and~(c)]{simwil:xx15}}]\label{lem:normal}
  Suppose that $G$ is a second-countable locally compact Hausdorff
  \'etale group\-oid. For each $\gamma\in G$, the map
  $\alpha\mapsto \gamma\alpha\gamma^{-1}$ is a bijection from
  $\intiso(G)_{s(\gamma)}$ onto $\intiso(G)_{r(\gamma)}$. Each
  $\intiso(G)_u$ is a normal subgroup of $G^u_u$.
\end{lemma}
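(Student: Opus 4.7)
The plan is to deduce part (c) from part (b) and a standard verification that $\intiso(G)$ is an \'etale subgroupoid, and to focus the real work on part (b). Since $\alpha \mapsto \gamma^{-1}\alpha\gamma$ will invert the conjugation map once we know both maps land where they should, bijectivity is automatic; hence the task reduces to showing that if $\alpha \in \intiso(G)_{s(\gamma)}$ then $\gamma\alpha\gamma^{-1}$ lies in $\intiso(G)_{r(\gamma)}$. It certainly lies in $\Iso(G)_{r(\gamma)}$, so what we must produce is an open neighbourhood of $\gamma\alpha\gamma^{-1}$ in $G$ that is contained in $\Iso(G)$.

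The idea is to conjugate a small open isotropy neighbourhood of $\alpha$ by a bisection neighbourhood of $\gamma$. Using that $G$ is \'etale and Hausdorff, choose a precompact open bisection $U \subseteq G$ with $\gamma \in U$. Since $\alpha \in \intiso(G)$, there exists an open set $V \subseteq \Iso(G)$ with $\alpha \in V$; by further intersecting with a basic open bisection through $\alpha$ we may assume $V$ is itself a bisection. After shrinking $V$, we may also assume $s(V) \subseteq s(U)$. Now form
\begin{equation*}
  W := UVU^{-1}.
\end{equation*}
In an \'etale groupoid, multiplication is an open map, so $W$ is open in $G$, and plainly $\gamma\alpha\gamma^{-1} \in W$.

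The crux is showing $W \subseteq \Iso(G)$. A general element of $W$ has the form $\beta\alpha'(\beta')^{-1}$ with $\beta,\beta' \in U$, $\alpha' \in V$, and the obvious composability conditions. Since $\alpha' \in V \subseteq \Iso(G)$ we have $s(\alpha')=r(\alpha')$, and composability forces $s(\beta)=r(\alpha')=s(\alpha')=s(\beta')$. Because $U$ is a bisection, $s|_U$ is injective, hence $\beta=\beta'$, and then $\beta\alpha'\beta^{-1}$ has range and source both equal to $r(\beta)$, so lies in $\Iso(G)$. This gives $W \subseteq \Iso(G)$, and therefore $\gamma\alpha\gamma^{-1} \in \intiso(G)_{r(\gamma)}$, completing part (b). The main obstacle in the argument is precisely this step: one wants to write ``$UVU^{-1}$ is obviously inside $\Iso(G)$,'' but that fails in general, and the use of the bisection property of $U$ to force $\beta=\beta'$ is what saves it.

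For part (c), we first observe that $\intiso(G)_u$ is a subgroup of $G_u^u$: the unit $u$ lies in $\go \subseteq \intiso(G)$; inversion is a homeomorphism of $G$ preserving $\Iso(G)$, so it preserves $\intiso(G)$; and since $\intiso(G)$ is open in $G$ and closed under multiplication (as a subgroupoid), products of elements of $\intiso(G)_u$ stay in $\intiso(G)_u$. Normality is then immediate from part (b): for any $\gamma \in G_u^u$ we have $s(\gamma)=r(\gamma)=u$, so conjugation by $\gamma$ maps $\intiso(G)_u = \intiso(G)_{s(\gamma)}$ bijectively onto $\intiso(G)_{r(\gamma)} = \intiso(G)_u$, i.e.\ $\gamma\,\intiso(G)_u\,\gamma^{-1} = \intiso(G)_u$.
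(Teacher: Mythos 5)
Your proof is correct. Note that the paper itself gives no argument for this lemma---it is imported verbatim from \cite[Proposition~2.5(b)~and~(c)]{simwil:xx15}---so the only comparison point is that reference, and your argument is essentially the standard one used there: conjugate an open set $V\subseteq\Iso(G)$ containing $\alpha$ by an open bisection $U$ containing $\gamma$, and use injectivity of $s|_U$ to force the two $U$-factors of any composable product in $UVU^{-1}$ to coincide, so that $UVU^{-1}$ is an open subset of $\Iso(G)$ containing $\gamma\alpha\gamma^{-1}$. You have correctly identified the one step that needs care (why $UVU^{-1}\subseteq\Iso(G)$); the extra reductions you impose on $V$ (that it be a bisection with $s(V)\subseteq s(U)$) are harmless but not actually needed, since openness of $UVU^{-1}$ follows from openness of multiplication and inversion alone.
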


\section{A uniqueness theorem}\label{sec:abstractuniqueness}

The paper \cite{bnr:jfa14} presents a uniqueness theorem for the
$C^*$-algebras $C^*(\Lambda)$ of $k$-graphs $\Lambda$ that
characterises injectivity of homomorphisms induced by the universal
property. The hypotheses of this theorem are in terms of the abelian
subalgebra $M$ generated by elements $s_\mu s^*_\nu$ of $C^*(\Lambda)$
such that $\mu x = \nu x$ for every infinite path $x$ of $\Lambda$. As
discussed in Remark~4.11 of \cite{bnr:jfa14}, $M$ is the
completion of $\cc(\intiso(G_\Lambda)) \subseteq \cc(G_\Lambda)$,
where $G_\Lambda$ is the groupoid associated to $\Lambda$ as in
\cite{kumpas:nyjm00}. Here we use different methods to generalize the
uniqueness theorem of \cite{bnr:jfa14} to the reduced $C^*$-algebras
of Hausdorff \'etale groupoids. Our result characterises injectivity
of homomorphisms of $C^*_r(G)$ in terms of injectivity of their
restrictions to the canonical copy of $C^*_r(\intiso(G))$ in
$C^*_r(G)$.

To see that such a copy exists, note that $\intiso(G)$ is open in $G$ so there is an
injective $^*$-homomorphism $\iota : C_c(\intiso(G)) \hookrightarrow C_c(G)$ given by
extension by zero. Since $\iota$ is isometric for the respective $I$-norms, and by
\cite[Proposition~1.9]{Phil05}, this map $\iota$ extends to inclusions
\begin{equation}\label{eq:iota}
    \iota : C^*(\intiso(G)) \to C^*(G)\qquad\text{ and }\qquad
    \iota_r : C^*_r(\intiso(G)) \to C^*_r(G).
\end{equation}

\begin{thm}\label{thm:uniqueness}
  Let $G$ be a locally compact Hausdorff \'etale groupoid. Let
  $M := \iota(C^*(\intiso(G))) \subseteq C^*(G)$ and
  $M_r := \iota_r(C^*_r(\intiso(G))) \subseteq C^*_r(G)$.
  \begin{enumerate}
  \item\label{it:M state ext} Suppose that $u \in \go$ satisfies
    $G^u_u = \intiso(G)_u$.  If $\phi_r$ is a state of $M_r$ that
    factors through $C^*_r(G^u_u)$, then $\phi_r$ extends uniquely to
    $C^*_r(G)$.  If $\phi$ is a state of $M$ that factors through
    $C^*(G^u_u)$, then $\phi$ extends uniquely to $C^*(G)$.
  \item\label{it:uniqueness} If $\pi : C^*_r(G) \to D$ is a
    $C^*$-homomorphism, then $\pi$ is injective if and only if
    $\pi \circ \iota_r$ is an injective homomorphism of
    $C^*_r(\intiso(G))$.
  \end{enumerate}
\end{thm}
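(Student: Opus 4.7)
The plan for \partref{1} is to prove uniqueness of extension by a Kadison--Schwarz excision argument centered at bump functions in $\cc(\go)$ that equal $1$ at $u$; existence of an extension follows from Hahn--Banach, so only uniqueness requires work. Since $\intiso(G)$ is a group bundle, the fibrewise restriction $\tau_u : C^*_r(\intiso(G)) \to C^*_r(G^u_u)$ is a well-defined $*$-homomorphism, and the factoring hypothesis gives $\phi_r = \psi \circ \tau_u$ for some state $\psi$ of $C^*_r(G^u_u)$. For $b \in \cc(\go)$ with $0 \leq b \leq 1$ and $b(u) = 1$, $\tau_u$ sends $1 - b$ to $0$, so $\phi_r((1-b)^2) = 0$. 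Applying the Kadison--Schwarz inequality to any state $\tilde\phi$ of $C^*_r(G)$ extending $\phi_r$ then gives $\tilde\phi(f(1-b)) = 0 = \tilde\phi((1-b) f b)$ for every $f \in C^*_r(G)$, and expanding $f = bfb + (1-b) f b + f(1-b)$ yields $\tilde\phi(f) = \tilde\phi(bfb)$.

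The crux is now to show that for $f \in \cc(G)$ and $b \in \cc(\go)$ with sufficiently small support around $u$, the product $bfb$ lies in $M_r$; this is the one step where the hypothesis $G^u_u = \intiso(G)_u$ is essential, and I expect it to be the main obstacle. Arguing by contradiction, if no such $b$ worked, a sequence $\gamma_n \in \supp(f) \setminus \intiso(G)$ with $r(\gamma_n), s(\gamma_n) \to u$ would admit a subsequential limit $\gamma \in \supp(f)$ with $r(\gamma) = s(\gamma) = u$, so that $\gamma \in G^u_u = \intiso(G)_u \subseteq \intiso(G)$; openness of $\intiso(G)$ would then force $\gamma_n \in \intiso(G)$ eventually, a contradiction. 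Once $bfb \in M_r$ is secured, the identity $\tilde\phi(f) = \phi_r(bfb)$ makes $\tilde\phi(f)$ depend only on $\phi_r$, yielding unique extension on the dense subspace $\cc(G)$ and hence on $C^*_r(G)$. The $C^*(G)$ half of \partref{1} follows from the same argument carried out at the universal level, since the Kadison--Schwarz inequality and the bump-function calculation are insensitive to the choice of $C^*$-norm.

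For \partref{2}, the forward direction is trivial. For the converse, suppose $\pi \circ \iota_r$ is injective and $\pi(a) = 0$ for some $a \in C^*_r(G)$; the aim is to deduce $a = 0$. Faithfulness of the canonical conditional expectation $\Phi_r : C^*_r(G) \to C_0(\go)$ ensures $\Phi_r(a^*a) \neq 0$ whenever $a \neq 0$, so $\{u \in \go : \Phi_r(a^*a)(u) > 0\}$ is a nonempty open subset of $\go$. I would record as a preparatory lemma that $\{u \in \go : G^u_u = \intiso(G)_u\}$ is dense in $\go$---a standard Baire-category consequence of second countability and the openness of $\intiso(G)$---and pick $u$ in the intersection of these two sets. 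The canonical trace $g \mapsto g(e)$ of $C^*_r(G^u_u)$ composes with $\tau_u$ to give a state of $M_r$ to which \partref{1} applies; its unique extension to $C^*_r(G)$ is visibly $\ev_u \circ \Phi_r$, which assigns the positive value $\Phi_r(a^*a)(u)$ to $a^*a$. On the other hand, transporting this state of $M_r$ through $(\pi|_{M_r})^{-1}$ to $\pi(M_r)$, extending to $\pi(C^*_r(G))$ by Hahn--Banach, and composing with $\pi$ yields a second extension which vanishes on $a^*a$ since $\pi(a^*a) = 0$; this contradicts the uniqueness furnished by \partref{1}, so $a = 0$.
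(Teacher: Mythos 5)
Your proposal is correct; part \partref{1} follows essentially the paper's route, while part \partref{2} takes a genuinely different and arguably more economical path. For \partref{1} you re-derive, via Cauchy--Schwarz applied to $1-b$, exactly the excision criterion that the paper imports from the discussion preceding Theorem~3.2 of \cite{and:tams79}: unique extension follows once one produces, for each $f\in C_c(G)$, a positive $b$ with $\phi_r(b)=\|b\|=1$ and $bfb\in M_r$. Your compactness-and-contradiction proof that such $b$ exists (a sequence in $\supp f\setminus\intiso(G)$ whose ranges and sources tend to $u$ would have a subsequential limit in $G^u_u=\intiso(G)_u$, contradicting openness of $\intiso(G)$) replaces the paper's explicit decomposition of $f$ over finitely many bisections with a case analysis at $u$; both are valid, yours being shorter but less constructive. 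For \partref{2} the paper first proves an abstract uniqueness theorem (a homomorphism injective on a subalgebra is injective provided a family of uniquely-extending states has jointly faithful GNS direct sum) and then verifies faithfulness of that direct sum by an $\varepsilon/2$-approximation using the cutdown lemma and the quotient maps onto $C^*_r(G^u_u)$. You bypass all of this: for $a\in\ker\pi$ with $\Phi_r(a^*a)(u)>0$ at some $u$ with $G^u_u=\intiso(G)_u$, you exhibit two distinct state extensions of the single state $(\ev_u\circ\Phi_r)|_{M_r}$ --- namely $\ev_u\circ\Phi_r$ itself and the Hahn--Banach pullback $\omega\circ\pi$, which annihilates $a^*a$ --- contradicting part \partref{1}. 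This is clean and checks out: $\ev_u\circ\Phi_r$ does restrict on $M_r$ to the trace composed with the quotient map, and $\omega\circ\pi$ is genuinely a state since its norm is squeezed between $\|\phi_r\|=1$ and $1$. The one place you lean on a nontrivial fact without proof is the density of $X=\{u:G^u_u=\intiso(G)_u\}$: this is indeed a Baire-category argument, but exhibiting the exceptional set as a countable union of nowhere dense sets requires covering $\Iso(G)\setminus\intiso(G)$ by nested open bisections and checking that the $r$-image of each piece is nowhere dense (the paper's Lemma~\ref{lem:technical}\partref{1}), so that step deserves more than the label ``standard.''
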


To prove the theorem, we need a few preliminary results. The first is
a slight improvement of the uniqueness theorem of \cite{bnr:jfa14} in
that we do not require that the subalgebra $M$ be abelian.

\begin{thm}\label{thm:abstractuniqueness}
  Let $A$ be a $C^*$-algebra and $M$ a $C^*$-subalgebra of
  $A$. Suppose that $S$ is a collection of states of $M$ such that
  \begin{enumerate}
  \item\label{it:ue} every $\phi\in S$ has a unique extension to a
    state $\tilde\phi$ of $A$; and
  \item\label{it:jf} the direct sum
    $\bigoplus_{\phi \in S} \pi_{\tilde\phi}$ of the GNS
    representations associated to extensions of elements of $S$ to $A$
    is faithful on $A$.
  \end{enumerate}
  Let $\rho : A \to B$ be a $C^*$-homomorphism. Then $\rho$ is
  injective if and only if it is injective on $M$.
\end{thm}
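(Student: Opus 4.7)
The forward direction is immediate, since $\rho|_M$ is a restriction of an injective homomorphism. So assume that $\rho|_M$ is injective and aim to show $\ker \rho = 0$. The plan is, for each $\phi \in S$, to produce a state of $A$ that factors through $\rho$ and restricts to $\phi$ on $M$; the unique-extension hypothesis~\partref{1} will then force this state to coincide with $\tilde\phi$, and hence let us transfer information about $\ker \rho$ into a vanishing statement for $\pi_{\tilde\phi}$; hypothesis~\partref{2} will then finish the job.

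For the construction, since $\rho|_M$ is an injective $*$-homomorphism of $C^*$-algebras it is isometric, so $\rho(M)$ is a $C^*$-subalgebra of $\rho(A) \subseteq B$ and $\rho|_M : M \to \rho(M)$ is a $*$-isomorphism. Push $\phi$ across this isomorphism to obtain a state $\phi \circ (\rho|_M)^{-1}$ of $\rho(M)$; extend it to a state $\psi$ of $\rho(A)$ by the standard Hahn--Banach state-extension argument (passing to unitisations if necessary); and set $\hat\phi := \psi \circ \rho$. Then $\hat\phi$ is a state of $A$ with $\hat\phi|_M = \phi$, so~\partref{1} forces $\hat\phi = \tilde\phi$. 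In particular $\tilde\phi = \psi \circ \rho$ factors through $\rho$.

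To pass from this factorisation of the state to one for the GNS representation, let $a \in \ker \rho$ and let $\Omega$ denote the cyclic vector of $\pi_{\tilde\phi}$. For any $b, c \in A$ one has $\rho(bac) = \rho(b)\rho(a)\rho(c) = 0$, so
\[
\langle \pi_{\tilde\phi}(a)\pi_{\tilde\phi}(c)\Omega,\, \pi_{\tilde\phi}(b^*)\Omega \rangle = \tilde\phi(bac) = \psi(\rho(bac)) = 0.
\]
Density of $\pi_{\tilde\phi}(A)\Omega$ in the GNS space then forces $\pi_{\tilde\phi}(a) = 0$ for every $\phi \in S$, and hypothesis~\partref{2} gives $a = 0$. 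The main obstacle is precisely this final passage from vanishing of the state $\tilde\phi$ on $\ker\rho$ to vanishing of the whole GNS representation, and the key trick is to test the matrix coefficients against products $bac$ rather than $a$ itself. Note that commutativity of $M$ is never invoked, which is exactly the improvement over \cite{bnr:jfa14}.
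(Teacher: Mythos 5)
Your proof is correct. The overall strategy coincides with the paper's --- both arguments show that for each $\phi \in S$ the unique extension $\tilde\phi$ must annihilate $J := \ker\rho$, then upgrade the vanishing of $\tilde\phi$ on products involving elements of $J$ to $\pi_{\tilde\phi}(J) = 0$ via cyclicity of the GNS vector, and finish with hypothesis~\partref{2} --- but you construct the annihilating extension by a different mechanism. The paper forms the subalgebra $A_0 := J + M$ (which is closed by a result of Dixmier), observes that the quotient map $\gamma : A_0 \to A_0/J \cong M$ yields a state $\phi\circ\gamma$ of $A_0$ extending $\phi$ and killing $J$, and deduces from hypothesis~\partref{1} (applied via the fact that states of $A_0$ extend to $A$) that $\tilde\phi|_{A_0} = \phi\circ\gamma$. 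You instead use that the injective $*$-homomorphism $\rho|_M$ is isometric to transport $\phi$ to $\rho(M)$, extend to a state $\psi$ of $\rho(A)$, and pull back through $\rho$; uniqueness then forces $\tilde\phi = \psi\circ\rho$ to factor through $\rho$. Your route avoids citing that an ideal plus a subalgebra is a $C^*$-subalgebra, at the cost of the (equally standard) facts that injective $*$-homomorphisms are isometric and that states extend from $C^*$-subalgebras; the latter is also used implicitly in the paper's argument. The two constructions are essentially the same object viewed from opposite sides ($A_0/J$ versus $\rho(M) \subseteq \rho(A) \cong A/J$), and your test against general products $bac$ rather than the paper's $a^*x^*xa$ is an equivalent way of exploiting cyclicity.
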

\begin{proof}
  The ``only if" statement is trivial. So suppose that $\rho$ is
  injective on $M$. Let $J=\ker\rho$; we must show that $J =
  \{0\}$.
  By hypothesis, we have $J\cap M=\{0\}$. Let $A_0 := J + M$; then
  $A_0$ is a $C^*$-subalgebra of $A$ by, for example,
  \cite[Corollary~1.8.4]{Dixmier}. Let $\gamma : A_0 \to M$ denote the
  quotient map. Since any state of $A_{0}$ extends to a state of $A$,
  hypothesis~(\ref{it:ue}) implies that each $\phi\in S$ has a unique
  state extension to $A_0$. Since $\phi \circ \gamma$ is an extension
  of $\phi$ to $A_0$, we deduce that $\phi \circ \gamma$ is the only
  extension of $\phi$ to a state of $A_0$ for each $\phi \in S$. Since
  $\tilde\phi|_{A_0}$ is also an extension of $\phi$ to $A_0$, we
  obtain
  \begin{equation}\label{eq:tphiA0}
    \tilde\phi(a)=\phi(\gamma(a))\quad\text{ for all $a\in A_0$}.
  \end{equation}

  Now fix $x \in J$. We have $a^*x^*xa\in J$ for all $a\in A$. Take
  $\phi \in S$. Since $J \subseteq A_0$, it follows
  from~\eqref{eq:tphiA0} that $\tilde\phi(a^*x^*xa)=0$ for all
  $a\in A$. Hence
  $(\pi_{\tilde\phi}(x) h \mid \pi_{\tilde\phi}(x) h) = 0$ for all
  $h \in \Hh_{\tilde\phi}$, giving $\tilde\phi(x) = 0$. Since
  $\phi \in S$ was arbitrary, we deduce that
  $\bigoplus_{\phi \in S} \pi_{\tilde\phi}(x) = 0$, and so $x = 0$
  by~(\ref{it:jf}).
\end{proof}

Next we need a technical lemma.

\begin{lemma}\label{lem:technical}
  Let $G$ be a locally compact Hausdorff \'etale groupoid.
  \begin{enumerate}
  \item\label{it:dense} The set
    $X: = \sset{u \in \go : G^u_u = \intiso(G)_u}$ is dense in $\go$.
  \item\label{it:cutdown} Suppose that $u \in \go$ satisfies
    $G^u_u = \intiso(G)_u$, and take $f \in C_c(G)$.  Then there
    exists $b \in C_c(\go)^+$ such that $\|b\| = b(u) = 1$ and
    $bfb\in C_c(\intiso(G))$.
  \end{enumerate}
\end{lemma}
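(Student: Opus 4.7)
The plan is to handle the two parts separately: Baire category for \partref{1}, and a compactness plus bump function argument for \partref{2}. The main obstacle is the Baire step in \partref{1}, where one has to pass from the candidate meager subsets to their closures in $\go$, since the natural candidates are only closed inside an open subset of $\go$ rather than in $\go$ itself.

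For \partref{1}, I fix a countable basis $\{W_n\}$ for the topology on $G$ consisting of precompact open bisections. For each $n$, set $A_n := W_n \cap (\Iso(G) \setminus \intiso(G))$; this is closed in $W_n$ because $\Iso(G)$ is closed and $\intiso(G)$ is open. If $u \in \go$ fails $G^u_u = \intiso(G)_u$, then some $\alpha \in G^u_u \setminus \intiso(G)$ lies in some $W_n$, so $u = s(\alpha) \in s(A_n)$; hence $\go \setminus X \subseteq \bigcup_n s(A_n)$. Because $s\restr{W_n}$ is a homeomorphism onto the open set $s(W_n)$, $s(A_n)$ is closed in $s(W_n)$. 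I then check that $\overline{s(A_n)}$ has empty interior in $\go$: any nonempty open $V \subseteq \overline{s(A_n)}$ must meet $s(W_n)$ (the boundary of any open set is nowhere dense), after which $V \cap s(W_n) \subseteq s(A_n)$ pulls back under $s\restr{W_n}$ to an open subset of $G$ contained in $\Iso(G)$, hence in $\intiso(G)$, contradicting $A_n \cap \intiso(G) = \emptyset$. Since $\go$ is locally compact Hausdorff, hence Baire, the density of $X$ follows.

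For \partref{2}, set $K := \supp f$. For any $b \in \cc(\go)$ one has $(bfb)(\gamma) = b(r(\gamma))f(\gamma)b(s(\gamma))$, so
\begin{equation*}
  \supp(bfb) \subseteq \{\gamma \in K : r(\gamma) \in \supp b \text{ and } s(\gamma) \in \supp b\}.
\end{equation*}
It therefore suffices to produce an open neighborhood $V$ of $u$ in $\go$ with
\begin{equation*}
  \{\gamma \in K : r(\gamma),s(\gamma) \in V\} \subseteq \intiso(G);
\end{equation*}
any $b \in \cc(\go)^+$ supported in $V$ with $b(u) = \|b\| = 1$ then has $bfb \in \cc(\intiso(G))$. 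To construct $V$, I argue by contradiction. If no such $V$ exists, using a countable decreasing neighborhood base at $u$ one produces a sequence $\gamma_n \in K \setminus \intiso(G)$ with $r(\gamma_n), s(\gamma_n) \to u$. Compactness of $K$ yields a subsequential limit $\gamma \in K$ with $r(\gamma) = s(\gamma) = u$, so $\gamma \in G^u_u = \intiso(G)_u$ by the hypothesis on $u$. But $\intiso(G)$ is open, so eventually $\gamma_n \in \intiso(G)$, contradicting the choice of $\gamma_n$.
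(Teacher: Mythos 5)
Your proof is correct. For part (a) you follow essentially the paper's route: cover the bad set $\sset{u \in \go : G^u_u \neq \intiso(G)_u}$ by countably many sets of the form $s(W\cap\Iso(G)\setminus\intiso(G))$ with $W$ a basic precompact open bisection, show each is nowhere dense, and apply Baire. The only difference is bookkeeping for the closure step: the paper arranges $\overline{B}\subset D$ (``nested'' bisections) so that closures can still be pulled back through the homeomorphism $r|_D$, whereas you keep a single bisection $W_n$ and note that $s(A_n)$ is closed in the open set $s(W_n)$ while the boundary of an open set is nowhere dense; both devices do the same job. For part (b) your argument is genuinely different and arguably cleaner: instead of decomposing $f$ over a finite cover by bisections and choosing a neighbourhood $V_D$ of $u$ by a three-way case analysis for each piece, you reduce everything to finding one neighbourhood $V$ of $u$ with $\sset{\gamma\in\supp f : r(\gamma),s(\gamma)\in V}\subseteq\intiso(G)$, and produce it from compactness of $\supp f$, openness of $\intiso(G)$, and the hypothesis $G^u_u=\intiso(G)_u$. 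What the paper's explicit construction buys is independence from sequential arguments; yours uses sequences and a countable neighbourhood base at $u$, which is legitimate under the paper's standing second-countability hypothesis (the paper's own proof of (a) also uses it), and could in any case be made sequence-free by observing that $(r,s)$ maps the compact set $\supp f\setminus\intiso(G)$ onto a compact subset of $\go\times\go$ that misses $(u,u)$. One small point worth making explicit at the end: it is your displayed containment $\supp(bfb)\subseteq\sset{\gamma\in\supp f : r(\gamma),s(\gamma)\in\supp b}$, with $\supp b$ rather than $V$, that should be quoted, since that set is closed in $\supp f$ and hence compact, so the support of $bfb$ is a \emph{compact} subset of $\intiso(G)$ rather than merely a set off which $bfb$ vanishes; this is exactly what membership in $\cc(\intiso(G))$ requires.
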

\begin{proof}
  (\ref{it:dense}) We say that $B$ is an open \emph{nested} bisection
  of $G$ if there is a precompact open bisection $D$ of $G$ such that
  $\overline B\subset D$.  This forces $\overline{r(B)}\subset r(D)$
  because $r$ is a homeomorphism on $D$. Note that $G$ has a countable
  basis of open nested bisections.

  Fix an open nested bisection $B$ of $G$ with $\overline{B}\subset D$
  as above. Let $B' := B \cap \Iso(G) \setminus \intiso(G)$. We claim
  that $r(B')$ is nowhere dense in $\go$. To see this, suppose that
  $V \subset \overline{r(B')}$ is open. We show that $V$ is
  empty. Since $D$ is an open bisection, $r|_D$ is a homeomorphism
  onto $r(D)$.  Since $\overline{B'}\subset D$ we have
  \[
  r(VD)=V\subset \overline{r(B')}=r(\overline{B'}).
  \]
  Thus $VD=r^{-1}(V)\cap D$ is an open subset of
  $\overline{B'}\subset \Iso(G)-\Iso(G)^\circ$, which has empty
  interior. Therefore $V = \emptyset$.

  Since $G$ is \'etale, we have
  \begin{multline*}
    \sset{u \in \go : G^u_u \not= \intiso(G)_{u}} \\ = \sset{r(B \cap
      \Iso(G) \setminus \intiso(G)) : \text{$B$ is an open nested
        bisection}}.
  \end{multline*}
  Since $G$ is second countable, it follows from the preceding
  paragraph that $\sset{u \in \go : G^u_u \not= \intiso(G)_{u}}$ is a
  countable union of nowhere-dense sets, and hence nowhere dense by
  the Baire Category Theorem as stated in, for example,
  \cite[Theorem~6.34]{kel:general}. Hence
  $\sset{u \in \go : G^u_u = \intiso(G)_u}$ is dense in $\go$.

  (\ref{it:cutdown}) Fix $f \in C_c(G)$. Express
  $f = \sum_{D \in F} f_D$ where $F$ is a finite collection of
  precompact open bisections of $G$ and each $f_D \in C_c(D)$. Choose
  open neighbourhoods $\sset{V_D \subseteq \go : D \in F}$ of $u$ as
  follows:
  \begin{itemize}
  \item if $u = r(\alpha) = s(\alpha)$ for some $\alpha \in D$, take
    $V_D = r(D \cap \intiso(G)) = s(D \cap \intiso(G))$ so that
    $V_D D V_D \subseteq D \cap \intiso(G)$ (this $V_D$ is nonempty
    because $\alpha \in D \cap G^u_u \subset D \cap \intiso(G)$ by
    choice of $u$);
  \item if there exists $\alpha \in D$ such that $r(\alpha) = u$ and
    $s(\alpha) \not= u$ or $s(\alpha) = u$ and $r(\alpha) \not= u$,
    choose an open subset $D' \subset D$ containing $\alpha$ such that
    $r(D') \cap s(D') = \emptyset$, and take $V_D = r(D')$, so that
    $V_D D V_D = \emptyset$; and
  \item if $u \not\in r(D)$ and $u \not\in s(D)$, use that
    $f_D \in C_c(D)$ to choose a neighbourhood $V_D$ of $u$ such that
    $f_D|_{V_D D V_D} = 0$.
  \end{itemize}
  Let $V := \bigcap_{D \in F} V_D$. Then $V$ is open and contains
  $u$. Choose $b \in \cc(V)^+$ such that $b(u) = \|b\| = 1$. By
  construction, $\supp(bf_D b)$ is a compact subset of
  $V_D D V_D\subset \intiso(G)$, so $bf_D b\in C_c(\intiso(G))$. Thus
  $bfb = \sum_{D\in F} bf_D b$ is also in $C_c(\intiso(G))$.
\end{proof}

\begin{remark}
  Since $\go\subset \intiso(G)$, if $G^{u}_u=\{u\}$ then
  $u\in X=\{u: G^u_u=\intiso(G)_u\}$.  Now if $G$ is topologically
  principal then $\intiso(G)=\go$ (see \cite{bcfs:sf14}) so in this
  case $X=\{u: G^u_u=\{u\}\}$.
\end{remark}

\begin{lemma}
  \label{lem:statecutdown} Let $G$ be a locally compact Hausdorff
  \'etale groupoid and $u\in \go$ such that $G^u_u\subset \intiso(G)$.
  Let $\epsilon>0$ be given.
  \begin{enumerate}
  \item\label{it:redcutdown} Let $a\in C^*_r(G)$. Then there exist
    $b, c\in C^*_r(\intiso(G))$ such that $b$ positive of norm $1$,
    such that $\phi(b)=1$ for all states $\phi$ that factor through
    $C_r^*(G^u_u)$, and such that $\|bab-c\|_r<\epsilon.$
  \item\label{it:fullcutdown} Let $a\in C^*(G)$. Then there exist
    $b, c\in C^*(\intiso(G))$ such that $b$ positive of norm
    $1$, such that $\phi(b)=1$ for all states $\phi$ that factor through
    $C^*(G^u_u)$, and such that $\|bab-c\|<\epsilon.$
  \end{enumerate}
\end{lemma}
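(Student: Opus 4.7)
The plan is to approximate $a$ by $f \in C_c(G)$ in the appropriate norm, then apply Lemma~\ref{lem:technical}\partref{2} to $f$ to produce a cutoff $b$ whose double compression $bfb$ is a compactly supported function on $\intiso(G)$; taking $c:=bfb$ then gives the required element of $C^*_r(\intiso(G))$. First observe that the hypothesis $G^u_u \subset \intiso(G)$ is in fact equivalent to the condition $G^u_u = \intiso(G)_u$ that appears in the technical lemma: the reverse inclusion $\intiso(G)_u \subset G^u_u$ is automatic, since $\intiso(G)\subset \Iso(G)$ forces any $\gamma\in\intiso(G)$ with $s(\gamma)=u$ to satisfy $r(\gamma)=u$.

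For part~\partref{1}, pick $f \in C_c(G)$ with $\|a - f\|_r < \epsilon$ by density, apply Lemma~\ref{lem:technical}\partref{2} to obtain $b \in C_c(\go)^+$ with $\|b\| = b(u) = 1$ and $bfb \in C_c(\intiso(G))$, and set $c := bfb$. Since $\go\subset\intiso(G)$, both $b$ and $c$ lie in $C_c(\intiso(G))$, and we regard them as elements of $C^*_r(\intiso(G))$ via $\iota_r$ of~\eqref{eq:iota}. The norm estimate
\[
\|bab - c\|_r = \|b(a-f)b\|_r \le \|b\|^{2}\|a - f\|_r < \epsilon
\]
is then immediate. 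For the state condition, recall that $\intiso(G)$ is an \'etale group bundle over $\go$ whose fiber at $u$ is $G^u_u = \intiso(G)_u$. The left regular representation $L^u$ of $C^*_r(\intiso(G))$ on $\ell^2(G^u_u)$ sends $h \in C_c(\intiso(G))$ to convolution by the restriction $h|_{G^u_u}$, and so factors through a canonical $*$-homomorphism $\pi_u\colon C^*_r(\intiso(G))\to C^*_r(G^u_u)$; a state $\phi$ that factors through $C^*_r(G^u_u)$ thus has the form $\psi\circ\pi_u$ for some state $\psi$ of $C^*_r(G^u_u)$. Because $b$ is supported on $\go$, the restriction $b|_{G^u_u}$ equals $b(u)=1$ at the unit $u$ and is zero elsewhere; hence $\pi_u(b)$ is the identity of $C^*_r(G^u_u)$ and $\phi(b)=\psi(1)=1$.

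Part~\partref{2} follows by the same argument, replacing $\|\cdot\|_r$ by the full $C^*$-norm and $\pi_u$ by the analogous $*$-homomorphism $C^*(\intiso(G))\to C^*(G^u_u)$ obtained by extending restriction of $C_c$-functions via the universal property (which is available because $\intiso(G)$ is a group bundle with $G^u_u$ as its fiber at $u$). The main obstacle I anticipate is being precise about the canonical quotient map to the fiber $C^*$-algebra and about the sense in which a state factors through it; once that map is identified, the key equality $\pi_u(b)=1$ is forced because $b$ is concentrated on the unit space, making the remainder of the argument a routine approximation.
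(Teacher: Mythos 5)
Your proposal is correct and follows essentially the same route as the paper's proof: approximate $a$ by $f\in C_c(G)$, apply Lemma~\ref{lem:technical}(\ref{it:cutdown}) to produce $b\in C_c(\go)^+$ with $bfb\in C_c(\intiso(G))$, set $c:=bfb$, and observe that the quotient map onto $C^*_r(G^u_u)$ sends $b$ to the identity so that $\phi(b)=1$. Your extra remarks---the equivalence of $G^u_u\subset\intiso(G)$ with $G^u_u=\intiso(G)_u$ and the explicit description of the fibre quotient map---are correct elaborations of steps the paper leaves implicit.
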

\begin{proof}
  We prove~(\ref{it:redcutdown}); the proof of~(\ref{it:fullcutdown})
  is exactly the same.  By continuity it suffices to show that for
  $a \in C_c(G)$ we can find $b \in C_c(\intiso(G))$ such that
  $bab \in C_c(\intiso(G))$ and $\phi(b) = \|b\|_r = 1$ for all states
  $\phi$ that factor through $C^*_r(G^u_u)$.

  Fix $f \in C_c(G)$. By Lemma~\ref{lem:technical}(\ref{it:cutdown}),
  there exists $b \in C_c(\go)^+ \subseteq C_c(\intiso(G))^+$ such
  that $b(u) = \|b\| = 1$ and $bfb\in C_r^*(\intiso(G))$. Since the
  quotient map from $C_r^*(\intiso(G))$ onto $C_r^*(G_u^u)$ carries
  $b$ to $1_{C_r^*(G^u_u)}$, we have $\phi(b) = \|b\| = 1$ for all
  states $\phi$ that factor through $C^*_r(G^u_u)$.
\end{proof}

We now have the wherewithal to prove Theorem~\ref{thm:uniqueness}.

\begin{proof}[Proof of Theorem~\ref{thm:uniqueness}]
  For~(\ref{it:M state ext}), we just prove the assertion about
  reduced $C^*$-algebras; the assertion about full $C^*$-algebras
  follows from exactly the same argument using
  part~(\ref{it:fullcutdown}) of Lemma~\ref{lem:statecutdown} instead
  of part~(\ref{it:redcutdown}). Fix $u \in \go$ such that
  $G^u_u \subseteq \intiso(G)$, and a state $\phi$ of
  $C_r^*(\intiso(G))$ that factors through $\cs_r(G^u_u)$. By the
  argument preceding \cite{and:tams79}*{Theorem~3.2}
  (\cite{and:tams79} is about unital $C^*$-algebras, but the argument
  also works in the non-unital setting) it will suffice to show that
  for each $a \in C^*_r(G)$ and $\varepsilon > 0$ there exists a
  positive element $b \in C_r^*(\intiso(G))$ such that
  $\phi(b) = \|b\| = 1$ and an element $c \in C_r^*(\intiso(G))$ such
  that $\|bab - c\| < \varepsilon$.  But this is just
  Lemma~\ref{lem:statecutdown}(\ref{it:redcutdown}).

  For~(\ref{it:uniqueness}), since $\iota_r$ is injective, the ``only
  if" is clear. Suppose that $\pi \circ \iota_r$ is injective, so
  $\pi$ is injective on $C_r^*(\intiso(G))$. Let
  $X=\{u\in \go: G^u_u = \intiso(G)_u\}$. For each $u\in X$, let $S_u$
  be the collection of pure states of $C_r^*(\intiso(G))$ that factor
  through $C_r^*(G^u_u)$. Let $S = \bigcup_{u\in X} S_u$. By
  part~(\ref{it:M state ext}) above, each $\phi$ in $S$ has unique
  extension $\tilde\phi$ to $C^*(G)$. For each $\phi \in S$, write
  $\pi_{\tilde\phi}$ for the GNS representation of $C^*_r(G)$
  associated to $\tilde\phi$ and for $A\subset S$, let
  $\pi_A=\bigoplus_{\phi\in A} \pi_{\tilde{\phi}}$.  By
  Theorem~\ref{thm:abstractuniqueness}, it suffices to show that
  $\pi_S:=\bigoplus_{\phi \in S} \pi_{\tilde\phi}$ is faithful on
  $C^*_r(G)$.

  Let $\Phi^r_I: C^*_{r}(\intiso(G))\to C_0(\go)$ be the conditional
  expectation that extends restriction of functions and $\ev_u$ be the
  evaluation map at $u$ for each $u\in X$.  We claim that for each
  $u\in X$, $\ev_u\circ \Phi^r_I$ factors through $C^*(G^u_u)$. Let
  $q_u: C^*_r(\intiso(G))\to C^*_r(G^u_u)$ be the quotient map and let
  $\mathcal{K}$ be an increasing net of compact subsets of
  $\go\setminus\{u\}$ such that
  $\bigcup \mathcal{K} = \go\setminus\{u\}$. For each
  $K \in \mathcal{K}$, choose $f_K \in C_c(\go \setminus\{u\})$ such
  that $f_K|_K \equiv 1$ and $0\leq f_K\leq 1$. Then
  $\{f_K\}_{K \in \mathcal{K}}$ is an approximate unit for
  $\ker(q_u)$. So for $a\in \ker(q_u)$, we have $a = \lim_K f_K a$,
  and hence
  \[
  \ev_u\circ \Phi_I^r(a) = \lim_K \ev_u \circ \Phi_I^r(f_K a) = \lim_K
  \ev_u(f_K) \ev_u\circ\Phi_I^r(a) = 0.
  \]
  This proves the claim.

  Suppose $a\in C^*_r(G)$ with $\pi_S(a)=0$.  We want to show that
  $a=0$. Let $\Phi_r: C^*_r(G)\to C_0(\go)$ be the faithful
  conditional expectation extending restriction of functions. Since
  $\Phi_r$ is faithful, it is enough to show that $\Phi_r(a^*a)=0$. By
  way of contradiction assume that $\Phi_r(a^*a)\neq 0$. By
  Lemma~\ref{lem:technical}, $X$ is dense, so there exists $u\in X$
  with $\Phi_r(a^*a)(u)>0$.  Pick $\epsilon$ such that
  \begin{equation}
    \label{eq:phi>e}
    \Phi_r(a^*a)(u)>\epsilon>0.
  \end{equation}
  By Lemma~\ref{lem:statecutdown}, there exists $b\in C^*_r(G)$ and
  $c \in C^*_r(\intiso(G))$ such that $\phi(b)=1$ for all states of
  $C^*_r(\intiso(G))$ that factor through $C^*(G^u_u)$ and such that
  \begin{equation}
    \label{eq:norm close}
    \|ba^*ab-c \|< \epsilon/2.
  \end{equation}
  We have $\pi_{S_u}(ba^*ab)=\pi_{S_u}(a^*a)=0$ by assumption. Thus
  \[
  \tilde{\phi}(ba^*ab)=\lim_\lambda \langle \pi_{\tilde{\phi}}(ba^*ab)
  (e_\lambda+N_{\tilde{\phi}}),e_\lambda+N_{\tilde{\phi}})\rangle= 0\]
  for all $\phi\in S_u$ where $\{e_\lambda\}$ is an approximate unit
  for $C^*_r(G)$. Now from Equation~\eqref{eq:norm close} we get
  $|\phi(c)|<\epsilon/2$ for all $\phi\in S_u$.  Thus
  $\|q_u(c)\|\leq \epsilon/2$.  Since $\ev_u\circ\Phi_I$ factors
  through $C^*(G^u_u)$ we deduce that
  $|\ev_u\circ\Phi_r(c) |\leq \epsilon/2$.

  On the other hand, if $\tilde{\psi}$ is the unique extension of a
  state $\psi$ on $C^*_r(\intiso(G))$ that factors through
  $C^*(G_u^u)$ then, since $\psi(b)=1$, we have
  $\tilde{\psi}(bdb)=\tilde{\psi}(d)$ for all $d\in C^*_r(G)$. In
  particular, $\ev_u\circ\Phi_r(ba^*a b)=\ev_u\circ\Phi_r(a^*a)$.
  Thus using Equation~\eqref{eq:norm close} again, we have
  \[
  |\ev_u\circ\Phi_r(a^*a)-\ev_u\circ\Phi_r(c)| =
  |\ev_u\circ\Phi_r(ba^*ab)-\ev_u\circ\Phi_r(c)| < \epsilon/2.
  \]
  Hence
  $|\ev_u\circ\Phi_r(a^*a)| < \epsilon/2 + \epsilon/2 =
  \epsilon$. This contradicts Equation~\eqref{eq:phi>e}.
\end{proof}

\section{Maximal abelian subalgebras, Cartan subalgebras and
  pseudo-diagonals}\label{sec:abelian isotropy}

In \cite{bnr:jfa14}*{Remark~4.11}, the authors conjecture that if
$\Lambda$ is a $k$-graph, and $M$ is the $C^*$-subalgebra of
$C^*(\Lambda)$ spanned by the elements $s_\mu s^*_\nu$ such that
$\mu x = \nu x$ for every infinite path $x$, then $M$ is a maximal
abelian subalgebra of $C^*(\Lambda)$. Yang established this in
\cite{yang:xx14} for cofinal $k$-graphs, which are those whose
groupoids are minimal. She has recently communicated to us the
preprint \cite{yang:xxxx} in which she proves the same result for
those $k$-graphs with the property that the interior of the isotropy
in the associated groupoid is closed.

We will show more generally that if $G$ is an \'etale groupoid $G$ in
which the interior of the isotropy is abelian, and if either~(a) the
interior of the isotropy is closed in $G$, or~(b) $G$ carries a
continuous cocycle into a discrete abelian group $H$ that is injective
on the isotropy over every unit of $G$, then the subalgebra $M_r$ of
Theorem~\ref{thm:uniqueness} is a maximal abelian subalgebra of
$C^*_r(G)$.

We also investigate when $M_r$ is a Cartan subalgebra in the sense of
Renault and a pseudo-diagonal in the sense
of~\cite{nagrez:pams14}. Both conditions require the existence of a
conditional expectation $\Psi : C^*_r(G) \to M_r$. We prove that such
an expectation exists if and only if $\intiso(G)$ is closed in $G$. We
show by example that there exist $k$-graphs $\Lambda$ for which the
interior of the isotropy in the associated infinite-path groupoid is
not closed, and therefore for which $M_r$ is not a Cartan subalgebra
of $C^*(\Lambda)$. We tie up a loose thread from~\cite{nagrez:jlms12}
by proving that every Cartan subalgebra is a pseudo-diagonal.

Throughout this section we will make frequent use of the following
fact: if $G$ is a second-countable \'etale Hausdorff groupoid, then
\cite{ren:groupoid}*{Proposition~II.4.2(i)} implies that the injection
$j:C_{c}(G)\to C_{0}(G)$ extends to an injective norm-decreasing
linear map $j: \cs_{r}(G) \to C_{0}(G)$.

\begin{prop} \label{prp:M FCE} Let $G$ be a locally compact Hausdorff
  \'etale groupoid. % Let $M = \iota(C^*(\intiso(G))) \subseteq C^*(G)$
  % and $M_r = \iota_r(C^*_r(\intiso(G))) \subseteq C^*_r(G)$
Let $M$ and $M_{r}$ be
  as in
  Theorem~\ref{thm:uniqueness}.
  \begin{enumerate}
  \item There exists a conditional expectation
  from $C^*_r(G)$ to $M_r$ if and only if $\intiso(G)$ is closed in
  $G$.
\item If $\intiso(G)$ is closed, then there is a faithful
  conditional expectation
  \begin{equation*}
    \Psi_r : C^*_r(G) \to M_r
  \end{equation*}
 such that
  $\Psi_r(f) = \iota_r(f|_{\intiso(G)})$ for all $f \in C_c(G)$.
\item If
  $\intiso(G)$ is closed and amenable, then there is also a
  conditional expectation (not necessarily faithful)
  $\Psi : C^*(G) \to M$ satisfying $\Psi(f) = \iota(f|_{\intiso(G)})$
  for all $f \in C_c(G)$.
  \end{enumerate}
\end{prop}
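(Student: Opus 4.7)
For part~(b), I would define $\Psi_r$ on $C_c(G)$ by $f\mapsto \iota_r(f|_{\intiso(G)})$, which is well-defined since $\intiso(G)$ closed forces $\supp(f)\cap\intiso(G)$ to be compact. The heart of the construction is to extend $\Psi_r$ to $C^*_r(G)$: for each $u\in\go$, the subspace $\ell^2(\intiso(G)_u)\subseteq\ell^2(G_u)$ yields an orthogonal projection $P_u$, and a matrix-coefficient calculation shows
\[
P_u L^u_G(f) P_u \;=\; L^u_{\intiso(G)}\bigl(f|_{\intiso(G)}\bigr)
\]
on $\ell^2(\intiso(G)_u)$. Taking operator norms and the supremum over $u$ gives $\|\iota_r(f|_{\intiso(G)})\|_r\le\|f\|_r$, so $\Psi_r$ extends by continuity to a contractive linear projection $C^*_r(G)\to M_r$; Tomiyama's theorem identifies it as a conditional expectation. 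Faithfulness follows because $\Phi^r_I\circ\Psi_r=\Phi_r$ holds trivially on $C_c(G)$ (both send $f$ to $f|_{\go}$) and extends by continuity, while $\Phi_r$ is faithful. This settles~(b), and with it the ``if'' direction of~(a).

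For the ``only if'' direction of~(a), suppose $\Psi:C^*_r(G)\to M_r$ is any conditional expectation. The key technical identity is
\[
\Phi^r_I\circ\Psi \;=\; \Phi_r \qquad\text{on } C^*_r(G).
\]
For $u$ in the dense set $X:=\{v\in\go:G^v_v=\intiso(G)_v\}$ from Lemma~\ref{lem:technical}\partref{1}, the state $\ev_u\circ\Phi^r_I$ of $M_r$ factors as $\tau_u\circ q_u$, where $q_u:M_r\to C^*_r(G^u_u)$ is the quotient and $\tau_u$ is the canonical trace on $C^*_r(G^u_u)$; hence Theorem~\ref{thm:uniqueness}\partref{1} gives it a unique state extension to $C^*_r(G)$. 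Since both $\ev_u\circ\Phi^r_I\circ\Psi$ and $\ev_u\circ\Phi_r$ are such extensions, they agree, and density of $X$ plus continuity yield the identity. For $\alpha\in\intiso(G)$ with $u=r(\alpha)=s(\alpha)$, I would then choose $h\in C_c(\intiso(G))$ supported on a bisection of $\intiso(G)$ containing $\alpha^{-1}$ with $h(\alpha^{-1})=1$; a bisection convolution computation gives $\Phi_r(\iota_r(h)b)(u)=j(b)(\alpha)$ for every $b\in C^*_r(G)$. Combining this with $M_r$-bilinearity $\Psi(\iota_r(h)a)=\iota_r(h)\Psi(a)$ and the displayed identity forces
\[
j(\Psi(a))(\alpha) \;=\; j(a)(\alpha) \qquad \text{for all } a\in C^*_r(G),\ \alpha\in\intiso(G).
\]

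To complete the contradiction, assume $\intiso(G)$ is not closed and pick $\gamma_0\in\overline{\intiso(G)}\setminus\intiso(G)$, which lies in $\Iso(G)$ since $\Iso(G)$ is closed. Choose $f\in C_c(G)$ with $f(\gamma_0)=1$. Along any sequence $\alpha_n\in\intiso(G)$ with $\alpha_n\to\gamma_0$, the displayed identity gives $j(\Psi(f))(\alpha_n)=f(\alpha_n)\to 1$; on the other hand, $j\circ\iota_r$ maps $M_r$ into the functions of $C_0(G)$ that vanish on $G\setminus\intiso(G)$ (by approximation in $C_c(\intiso(G))$), so continuity of $j(\Psi(f))\in C_0(G)$ forces $j(\Psi(f))(\alpha_n)\to j(\Psi(f))(\gamma_0)=0$, a contradiction. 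For~(c), amenability of $\intiso(G)$ makes $\lambda_{\intiso(G)}:C^*(\intiso(G))\to C^*_r(\intiso(G))$ an isomorphism, and I would define
\[
\Psi \;:=\; \iota\circ\lambda_{\intiso(G)}^{-1}\circ\iota_r^{-1}\circ\Psi_r\circ\lambda_G,
\]
where $\lambda_G:C^*(G)\to C^*_r(G)$ is the canonical quotient; contractivity and the $M$-bimodule projection property descend from $\Psi_r$, Tomiyama's theorem makes $\Psi$ a conditional expectation, and tracing through the definition on $f\in C_c(G)$ recovers $\Psi(f)=\iota(f|_{\intiso(G)})$. The main obstacle throughout is the identity $\Phi^r_I\circ\Psi=\Phi_r$ in the second paragraph: the bare existence of $\Psi$ tells us nothing about its values, and only by applying Theorem~\ref{thm:uniqueness}\partref{1} to the factored states $\tau_u\circ q_u$ along the dense set $X$ does one rigidify $\Psi$ enough to transfer it into the pointwise formula on $\intiso(G)$.
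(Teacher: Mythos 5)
Your proofs of parts (b) and (c) match the paper's: the same projection/compression argument $P_uL^u_G(f)P_u\cong L^u_{\intiso(G)}(f|_{\intiso(G)})$ gives contractivity, Tomiyama's theorem gives the expectation, and faithfulness follows from $\Phi_r=\Phi_r\circ\Psi_r$; for (c) the paper extends $\Psi_0$ directly using that $\iota$ is isometric (Lemma~\ref{lem:inj}) rather than composing with $\lambda_{\intiso(G)}^{-1}$, but this is the same argument repackaged. Where you genuinely diverge is the ``only if'' direction of (a). The paper's argument is elementary and local: given an expectation $\Phi$ and $\gamma_n\to\gamma\in\overline{\intiso(G)}\setminus\intiso(G)$, it picks $f$ supported on a bisection through $\gamma$ and cutoff functions $g_n\in C_c(r(V_n))\subset M_r$ with $V_n\subseteq\intiso(G)$ a neighbourhood of $\gamma_n$, so that $g_n*f\in C_c(\intiso(G))$ is fixed by $\Phi$ and the module property yields $j(\Phi(f))(\gamma_n)=f(\gamma_n)\to 1$ directly, with no input from Section~\ref{sec:abstractuniqueness}. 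You instead prove the stronger rigidity statement that any expectation $\Psi$ must satisfy $\Phi^r_I\circ\Psi=\Phi_r$ (hence $j(\Psi(a))=j(a)$ on all of $\intiso(G)$), by invoking the unique-state-extension result Theorem~\ref{thm:uniqueness}\partref{1} for the states $\ev_u\circ\Phi^r_I$ along the dense set $X$ of Lemma~\ref{lem:technical}\partref{1} and then transporting the identity to points of $\intiso(G)$ via a bisection-supported $h$ with $h(\alpha^{-1})=1$. Your route is heavier---it needs the uniqueness machinery and the quotient maps $q_u$---but it buys more: it shows the expectation, when it exists, is unique and necessarily given by restriction, whereas the paper's computation only extracts the values $j(\Phi(f))(\gamma_n)$ needed for the contradiction. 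Both arguments close the same way, using that $j(M_r)$ vanishes off $\intiso(G)$ to contradict continuity of $j(\Psi(f))$. I see no gaps in your version; the one point worth making explicit is that $\Phi^r_I\circ\Psi$ and $\Phi_r$ are compared as $C_0(\go)$-valued maps, so agreement on the dense set $X$ propagates to all units by continuity before you evaluate at $u=s(\alpha)$, which need not lie in $X$.
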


To prove this proposition we need a lemma.

\begin{lemma}
  \label{lem:inj}
  If $\intiso(G)$ is amenable then $\iota: C^*(\intiso(G))\to C^*(G)$
  is injective.
\end{lemma}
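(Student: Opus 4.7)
The plan is to exploit the naturality square that compares $\iota$ with $\iota_r$ via the canonical surjections from the full to the reduced groupoid $C^*$-algebras. Writing $\lambda_H : C^*(\intiso(G)) \to C^*_r(\intiso(G))$ and $\lambda_G : C^*(G) \to C^*_r(G)$ for these quotient maps, I would first verify that
\[
    \lambda_G \circ \iota = \iota_r \circ \lambda_H.
\]
This is immediate on the dense $^*$-subalgebra $C_c(\intiso(G))$, since all four maps act there simply as extension by zero (composed, where appropriate, with the canonical inclusion of $C_c$ into the reduced completion); the identity then extends by continuity to all of $C^*(\intiso(G))$.

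Next I would feed in the two ingredients that do the real work. Amenability of $\intiso(G)$ is exactly the statement that $\lambda_H$ is an isomorphism, and in particular injective. And $\iota_r$ is known to be injective from the discussion preceding the lemma, namely~\eqref{eq:iota} via \cite[Proposition~1.9]{Phil05}. Chasing these facts around the square, if $a \in C^*(\intiso(G))$ satisfies $\iota(a) = 0$, then
\[
    \iota_r(\lambda_H(a)) = \lambda_G(\iota(a)) = 0,
\]
so $\lambda_H(a) = 0$ by injectivity of $\iota_r$, and then $a = 0$ by injectivity of $\lambda_H$. Hence $\iota$ is injective.

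I do not anticipate any genuine obstacle. The only steps requiring any care are (i) checking that $\iota$ really is a well-defined $^*$-homomorphism on the full $C^*$-algebra — this is automatic because the $C_c$-level map is $I$-norm bounded, so extends by the universal property of $C^*(\intiso(G))$ — and (ii) confirming commutativity of the square on the dense subalgebra $C_c(\intiso(G))$. Once these are in hand, amenability plus the already-cited injectivity of $\iota_r$ yields the result by a one-line diagram chase.
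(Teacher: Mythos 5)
Your argument is correct, and it is a genuinely different route from the one the paper takes. You chase the commuting square $\lambda_G \circ \iota = \iota_r \circ \lambda_H$, using amenability to make $\lambda_H : C^*(\intiso(G)) \to C^*_r(\intiso(G))$ an isomorphism and then invoking injectivity of $\iota_r$, which the paper does assert unconditionally via \cite[Proposition~1.9]{Phil05} in the discussion surrounding~\eqref{eq:iota}; the diagram chase is then immediate and there is no circularity, since Phillips' embedding of reduced $C^*$-algebras of open subgroupoids needs no amenability. The paper instead intertwines conditional expectations: it checks that $\Phi \circ \iota = \iota \circ \Phi_I$ for the expectations onto $C_0(\go)$ given by restriction of functions, uses amenability to identify $\Phi_I$ with the \emph{faithful} reduced expectation $\Phi_I^r$, and then runs the standard argument (as in Lemma~3.13 of \cite{sww:dm14}): if $\iota(a)=0$ then $\iota(\Phi_I(a^*a)) = \Phi(\iota(a^*a)) = 0$, whence $\Phi_I(a^*a)=0$ because $C_0(\go) \hookrightarrow C^*(G)$ is injective, and faithfulness gives $a=0$. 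Both proofs spend amenability in exactly the same place --- identifying the full and reduced $C^*$-algebras of $\intiso(G)$ --- but they differ in the auxiliary injectivity they lean on: yours needs the full strength of the embedding $C^*_r(\intiso(G)) \hookrightarrow C^*_r(G)$, while the paper only needs injectivity of $C_0(\go) \hookrightarrow C^*(G)$ plus faithfulness of the reduced expectation. Since the paper already cites Phillips' result for $\iota_r$, your version is legitimate in context and slightly shorter; the paper's version is more self-contained relative to the expectations it has already built, and its mechanism (transporting faithfulness of an expectation across an intertwining homomorphism) is the one reused elsewhere in the paper.
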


\begin{proof}
  Observe that the conditional expectations
  $\Phi_I : C^*(\intiso(G)) \to C_0(\go)$ and
  $\Phi : C^*(G) \to C_0(\go)$ determined by restriction of functions
  satisfy $\Phi \circ \iota = \iota \circ \Phi_I$. Restriction of
  functions also determines a faithful conditional expectation
  $\Phi^r_I : C^*_r(\intiso(G)) \to C_0(\go)$ . Since $\intiso(G)$ is
  amenable, $C^*(\intiso(G)) = C^*_r(\intiso(G))$. So
  $\Phi_I=\Phi^r_I$ and the latter is faithful. So a standard argument
  (see, for example, \cite{sww:dm14}*{Lemma~3.13}) shows that $\iota$
  is injective.
\end{proof}

\begin{proof}[Proof of Proposition~\ref{prp:M FCE}]
  We first show that if $\intiso(G)$ is not closed, then there does
  not exist a conditional expectation from $C^*_r(G)$ to $M_r$. We
  argue by contradiction: that is, we suppose that $\intiso(G)$ is not
  closed and that there is a conditional expectation
  $\Phi : C^*_r(G) \to M_r$, and we derive a contradiction. Fix
  $\gamma \in \overline{\intiso(G)} \setminus \intiso(G)$, and choose
  a sequence $\gamma_n$ in $\intiso(G)$ converging to $\gamma$.
  Choose a precompact open bisection $U$ containing $\gamma$ and a
  function $f \in C_c(G)$ such that $\supp(f)\subset U$ and
  $f(\gamma) = 1$. Without loss of generality, the $\gamma_n$ are all
  in the support of $f$. Since each $\gamma_n$ is interior to the
  isotropy, we can choose open sets $V_n \subseteq U \cap \intiso(G)$
  such that $\gamma_n \in V_n$. The sets $r(V_n) \subseteq \go$ are
  then open sets of units, and we can choose functions
  $g_n \in C_c(r(V_n))$ such that $g_n(r(\gamma_n)) = 1$.

  Now each $g_n \in M_r$, and so $\Phi(g_n * f) = g_n * \Phi(f)$ for
  each $n$. Since $f$ is supported on a bisection and $g_n$ is
  supported in $G^{(0)}$, the element $g_n * f$ is supported on
  $\supp(f) \cap r^{-1}(\supp(g_n)) \subseteq \supp(f) \cap
  r^{-1}(r(V_n)) \subseteq \intiso(G)$.
  Since $g_n$ and $f$ are compactly supported, we obtain
  $g_n * f \in C_c(\intiso(G)) \subseteq M_r$. Hence
  $\Phi(g_n * f) = g_n * f$ for each $n$. We have
  $j(g_n * h)(\gamma_n) = j(h)(\gamma_n)$ for all $h \in C^*_r(G)$,
  and so
  \[
  j(\Phi(f))(\gamma_n) = j(g_n * \Phi(f))(\gamma_n) = j(\Phi(g_n *
  f))(\gamma_n) = j(g_n * f)(\gamma_n) = f(\gamma_n).
  \]
  Since $f$ is continuous and $f(\gamma) = 1$, we have
  $j(\Phi(f))(\gamma_n) \to 1$.

  We have $\Phi(f) \in M_r$, say $\Phi(f) = \iota_r(a)$ where
  $a \in C^*_r(\intiso(G))$. We can choose elements
  $h_n \in C_c(\intiso(G))$ such that $h_n \to a$ in
  $C^*_r(\intiso(G))$. We therefore have $\iota_r(h_n) \to \Phi(f)$ in
  $C^*_r(G)$. Since $j : C^*_r(G) \to C_0(G)$ is continuous, it
  follows that $j(\iota_r(h_n)) \to j(\Phi(f))$ in $C_0(G)$, and in
  particular $j(\iota_r(h_n))(\gamma) \to j(\Phi(f))(\gamma)$. Since
  each $h_n$ belongs to $C_c(\intiso(G))$ and since
  $\gamma \not\in \intiso(G)$, we have $j(\iota_r(h_n))(\gamma) = 0$
  for all $n$ and it follows that $j(\Phi(f))(\gamma) = 0$.

  Putting all this together, we have $\gamma_n \to \gamma$,
  $j(\Phi(f))(\gamma_n) \to 1$ and $j(\Phi(f))(\gamma) = 0$, which
  contradicts continuity of $j(\Phi(f))$.  This proves the ``only if''
  part of (a).

  Now suppose that $\intiso(G)$ is closed. Then
  $f|_{\intiso(G)}\in C_c(\intiso(G))$ for all $f\in C_c(G)$.  Define
  $\Psi_0 : C_c(G) \to C_c(\intiso(G))$ by
  $\Psi_0(f) = \iota(f|_{\intiso(G)})$.  The image of $\Psi_0$ is
  $C_c(\intiso(G))$ since $\intiso(G)$ is closed. Also, $\Psi_0$ is a
  linear idempotent. We claim that
  \[
  \|\Psi_0(f)\|_{\cs_r(\intiso(G))} \le \|f\|_{\cs_r(G)}\quad\text{
    for all $f \in C_c(G)$.}
  \]
  Fix $f \in C_c(G)$ and $\varepsilon > 0$. We will show that
  $\|\Psi_0(f)\|_r \le \|f\|_r + \varepsilon$. Let
  $f_0 := f|_{\intiso(G)}$. There is a unit $u \in \go$ such that the
  associated regular representation
  $L^u : C^*(\intiso(G)) \to \Bb(\ell^2(\intiso(G)_u))$ satisfies
  $\|L^u(f_0)\| \ge \|f_0\|_r - \varepsilon$. Let $\pi_u$ be the
  regular representation of $C^*(G)$ on $\ell^2(G_u)$. Let
  $P \in \Bb(\ell^2(G_u))$ be the orthogonal projection into
  $\clsp\sset{\delta_\gamma : \gamma \in \intiso(G)_u} \subset
  \ell^2(G_u)$.

  For $\alpha,\beta \in G_u$, we have
  \begin{align*}
    \big(P\pi_u(f) P \delta_\alpha \mid \delta_\beta \big)
    &= \begin{cases} \big(\pi_u(f)\delta_\alpha \mid
      \delta_\beta\big)
      &\text{ if $\alpha,\beta \in \intiso(G)$}\\
      0 &\text{ otherwise}
    \end{cases} \\
    &= \begin{cases}
      f(\beta\alpha^{-1}) &\text{ if $\alpha,\beta \in \intiso(G)$}\\
      0 &\text{ otherwise}
    \end{cases}\\
    &= \begin{cases} \big(L^u(f_0) \delta_\alpha \mid
         \delta_\beta\big) &\text{ if
           $\alpha,\beta \in \intiso(G)$}\\
         0 &\text{ otherwise.}
       \end{cases}
  \end{align*}
  So the canonical unitary isomorphism
  $\ell^2(\intiso(G)_u) \cong P \ell^2(G_u)$ intertwines $P\pi_u(f) P$
  and $L^u(f_0)$, giving $\|L^u(f_0)\| = \|P\pi_u(f) P\|$. Hence
  \[
  \|\Psi_0(f)\|_r = \|\iota_r(f_0)\|_r \le \|L^u(f_0)\| + \varepsilon
  = \|P\pi_u(f) P\| + \varepsilon \le \|f\|_r + \varepsilon.
  \]
  Hence $\Psi_0$ extends to a linear idempotent
  $\Psi_r : C^*_r(G) \to M_r$.  Theorem~II.6.10.2 of
  \cite{bla:operator06} shows that $\Psi_r$ is a conditional
  expectation. Since $\go\subset \intiso(G)$, the canonical
  expectation $\Phi_r : C^*_r(G) \to C_0(\go)$ satisfies
  $\Phi_r = \Phi_r \circ \Psi_r$. Since $\Phi_r$ is faithful, it
  follows that $\Psi_r$ is too.

This gives us the remaining implication for part~(a) as well as
part~(b).

To establish (c), suppose that $\intiso(G)$ is amenable. Then
Lemma~\ref{lem:inj} shows $\iota$ is injective and hence isometric,
and so we have
$\|\iota(f)\| = \|f\|_{C^*(\intiso(G))} =\|f\|_{C^*_r(\intiso(G))}$
for all $f \in \cc(\intiso(G))$. In particular we saw above that for
$f \in \cc(G)$ we have
$\|\iota(f|_{\intiso(G)})\| \le \|f\|_{C^*_r(G)} \le \|f\|_{C^*(G)}$.
Hence $\Psi_0$ extends to a linear idempotent $\Psi$ of norm 1 from
$C^*(G)$ to $M$. Once again, \cite[Theorem~II.6.10.2]{bla:operator06}
shows that $\Psi$ is a conditional expectation.
\end{proof}

We now consider when $M_r$ is a maximal abelian subalgebra of
$C^*_r(G)$.

\begin{thm}\label{thm:masa}
  Let $G$ be a locally compact Hausdorff \'etale groupoid, and suppose
  that $\intiso(G)$ is abelian. Then $\intiso(G)$ is amenable,
  $M := \iota(C^*(\intiso(G)))$ is an abelian subalgebra of $C^*(G)$,
  and $M_r = \iota_r(C^*(\intiso(G)))$ is an abelian subalgebra of
  $C^*_r(G)$. Suppose that either
  \begin{enumerate}
  \item \label{masa-a}$\intiso(G)$ is closed in $G$, or
  \item \label{masa-b}there exist a countable discrete abelian group $H$ and a
    continuous 1-cocycle $c : G \to H$ such that $c|_{G^x_x}$ is
    injective for each $x \in \go$.
  \end{enumerate}
  Then $M_r$ is maximal abelian in $C^*_r(G)$.
\end{thm}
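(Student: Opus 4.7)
The plan is to establish the amenability and abelianness claims by general principles, then prove maximal abelianness via a direct commutant argument split according to the two sufficient conditions. Since each fibre $\intiso(G)_u$ is abelian, $\intiso(G)$ is an \'etale bundle of abelian groups and hence amenable by \cite{ren:xx13}, so $C^*(\intiso(G)) = C^*_r(\intiso(G))$ and $\iota$ is isometric by Lemma~\ref{lem:inj}. A direct convolution computation on $C_c(\intiso(G))$---any composable pair shares a common unit and hence lies in a common abelian fibre---shows $M$ and $M_r$ are abelian. To prove $M_r$ maximal abelian, fix $a \in C^*_r(G)$ commuting with $M_r$; the goal is to show $a \in M_r$. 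From $(gf-fg)(\gamma) = (g(r(\gamma))-g(s(\gamma)))f(\gamma)$ on $C_c(G)$, extended via continuity of $j$, commutation of $a$ with $C_0(\go) \subseteq M_r$ forces $j(a)$ to be supported in $\Iso(G)$.

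In case~(a), Proposition~\ref{prp:M FCE} supplies the faithful conditional expectation $\Psi_r : C^*_r(G) \to M_r$ with $\Psi_r(f) = \iota_r(f\restr{\intiso(G)})$ on $C_c(G)$; by continuity this gives $j(\Psi_r(c)) = j(c)\restr{\intiso(G)}$ (extended by zero) for all $c \in C^*_r(G)$. Let $b := a - \Psi_r(a)$, still in the commutant of $M_r$. Then $j(b)$ vanishes on $\intiso(G)$ but is supported in $\Iso(G)$, so $\supp(j(b)) \subseteq \Iso(G) \setminus \intiso(G)$, which has empty interior in $G$ (any open subset of $\Iso(G)$ lies in its interior $\intiso(G)$). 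Continuity of $j(b)$ then forces $j(b) = 0$; injectivity of $j$ gives $b = 0$, so $a = \Psi_r(a) \in M_r$.

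In case~(b), the gauge action $\alpha_\chi(f)(\gamma) := \chi(c(\gamma))f(\gamma)$ of $\hat H$ on $C^*_r(G)$ has spectral projections $P_h(a) := \int_{\hat H} \overline{\chi(h)}\alpha_\chi(a)\,d\chi$ satisfying $j(P_h(a))(\gamma) = j(a)(\gamma)\mathbf{1}_{c(\gamma)=h}$. Fej\'er summability on the compact metrisable group $\hat H$ recovers $a$ as a norm-limit of finite sums of the $P_h(a)$, so since $M_r$ is closed it suffices to show each $P_h(a) \in M_r$. Let $U_h := c^{-1}(h) \cap \intiso(G)$: since $H$ is discrete, $c^{-1}(h)$ is clopen in $G$, so $U_h$ is open; injectivity of $c|_{G^u_u}$ forces $c^{-1}(h)$ to meet each isotropy fibre at most once, so $U_h$ is an open bisection of $G$ contained in $\intiso(G)$. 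From the previous paragraph, $j(P_h(a))$ is supported in $\Iso(G) \cap c^{-1}(h)$; for a point $\gamma_0$ there but not in $\intiso(G)$, pick $\eta_n \to \gamma_0$ with $\eta_n \notin \Iso(G)$, use $c(\eta_n) = h$ for large $n$ (by local constancy of $c$) together with $j(a)(\eta_n) = 0$, and invoke continuity of $j(P_h(a))$ to obtain $j(P_h(a))(\gamma_0) = 0$. Hence $j(P_h(a)) \in C_0(U_h)$. The standard fact that on an open bisection $B$ the reduced $C^*$-norm of an element of $C_c(B)$ equals its sup-norm makes $j$ an isometric isomorphism from $\overline{C_c(B)}^{C^*_r(G)}$ onto $C_0(B)$; applied to $B = U_h$, this produces an element of $\overline{C_c(U_h)}^{C^*_r(G)} \subseteq M_r$ with the same $j$-image as $P_h(a)$, so injectivity of $j$ yields $P_h(a) \in M_r$.

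The main obstacle is case~(b). One must upgrade the $j$-support of $P_h(a)$ from $\Iso(G) \cap c^{-1}(h)$---which need not be close to $U_h$---to the genuine open bisection $U_h$ before the bisection-to-$C_0(B)$ identification applies; this upgrade hinges on the observation that every point of $\Iso(G) \setminus \intiso(G)$ is a limit of non-isotropy points, combined with local constancy of the cocycle $c$ into the discrete group $H$. The Fej\'er convergence $a = \lim_n \sum_h \hat F_n(h) P_h(a)$ in $C^*_r(G)$-norm for the action of $\hat H$ also needs to be verified, as do the routine checks that $U_h$ really is a bisection and that the bisection-to-$C_0(B)$ identification extends correctly through $j$.
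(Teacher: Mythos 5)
Your proposal is correct and follows essentially the same route as the paper: the commutant argument via $j$ and $C_0(\go)$ (the paper's Lemma~\ref{lem:A(P)masa}), the conditional expectation $\Psi_r$ in case~(a), and in case~(b) the dual gauge action, Fourier-coefficient decomposition, and the identification of the reduced norm with the sup-norm on the bisection $c^{-1}(h)\cap\intiso(G)$. The only differences are cosmetic: you package the final step of case~(b) as the abstract isometric isomorphism $\overline{C_c(U_h)}^{\|\cdot\|_r}\cong C_0(U_h)$ where the paper uses an explicit cutoff approximation $g*a_h$, and you defer the refinement of the support of $j(a)$ from $\Iso(G)$ to $\intiso(G)$ to the end rather than performing it up front.
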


Before proving Theorem~\ref{thm:masa}, we establish a technical result
that may be useful in future.

\begin{lemma}\label{lem:A(P)masa}
  Let $G$ be a locally compact Hausdorff \'etale groupoid, and suppose
  that $\intiso(G)$ is abelian. Then $M_r$ is maximal abelian in
  $C^*_r(G)$ if and only if
  \[
  \{a \in C^*_r(G) : j(a) \in C_0(\intiso(G))\} \subseteq M_r.
  \]
\end{lemma}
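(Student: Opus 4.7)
The plan is to reduce the biconditional to the equality $N = M_r$, where $N := \{a \in \cs_r(G) : j(a) \in C_0(\intiso(G))\}$. The inclusion $M_r \subseteq N$ is immediate: for $f \in \cc(\intiso(G))$, the extension $\iota(f)$ vanishes off $\intiso(G)$, so $j(\iota(f)) = \iota(f) \in C_0(\intiso(G))$; since $j : \cs_r(G) \to C_0(G)$ is norm-continuous, this persists for every element of $M_r$. So only the equality $N = M_r$ needs to be addressed.

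For the forward direction, assuming $M_r$ is maximal abelian, I would show that $N$ is itself an abelian $C^*$-subalgebra of $\cs_r(G)$, so that $N = M_r$ follows from maximality. The key tool is the convolution formula $j(ab)(\gamma) = \sum_{\beta \in G_{s(\gamma)}} j(a)(\gamma\beta^{-1}) j(b)(\beta)$, which I would obtain by expanding $L^{s(\gamma)}(a) L^{s(\gamma)}(b) \delta_{s(\gamma)}$ in $\ell^2(G_{s(\gamma)})$ using the identities $L^u(b)\delta_u = \sum_{\beta \in G_u} j(b)(\beta)\delta_\beta$ and $(L^u(a)\delta_\beta \mid \delta_\gamma) = j(a)(\gamma\beta^{-1})$, both extended from $\cc(G)$ by continuity. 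For $a, b \in N$, a nonzero summand requires both $\beta$ and $\gamma\beta^{-1}$ to lie in $\intiso(G)$, and since $\intiso(G)$ is a subgroupoid this forces $\gamma = (\gamma\beta^{-1})\beta \in \intiso(G)$; thus $N$ is closed under multiplication. Closure under the involution follows because $\gamma \mapsto \gamma^{-1}$ preserves $\intiso(G)$, and norm-closure follows from continuity of $j$. For commutativity, when $\gamma \in \intiso(G)_u$ the change of variables $\beta \mapsto \gamma\beta^{-1}$ is a bijection of the abelian group $\intiso(G)_u$, which together with the hypothesis that $\intiso(G)$ is abelian yields $j(ab)(\gamma) = j(ba)(\gamma)$.

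For the backward direction, assuming $N \subseteq M_r$, I would show that any $a \in \cs_r(G)$ commuting with $M_r$ already lies in $N$, and hence in $M_r$. Specialising the convolution formula to $b \in C_0(\go) \subseteq M_r$ collapses the sums to give $j(ab)(\gamma) = j(a)(\gamma) b(s(\gamma))$ and $j(ba)(\gamma) = b(r(\gamma)) j(a)(\gamma)$; since $C_0(\go)$ separates points of $\go$, the equation $ab = ba$ forces $j(a)(\gamma) = 0$ whenever $r(\gamma) \neq s(\gamma)$, so $j(a)$ is supported in $\Iso(G)$. For $\gamma \in \Iso(G) \setminus \intiso(G)$, $\gamma$ is not interior to $\Iso(G)$, so second countability yields a sequence $\delta_n \to \gamma$ in $G \setminus \Iso(G)$; continuity of $j(a) \in C_0(G)$ combined with $j(a)(\delta_n) = 0$ then forces $j(a)(\gamma) = 0$. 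Hence $j(a) \in C_0(\intiso(G))$ and $a \in N = M_r$, giving $M_r' \cap \cs_r(G) = M_r$.

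The principal technical obstacle will be justifying the convolution formula on all of $\cs_r(G)$ rather than only on $\cc(G)$: since $j(a)$ for general $a$ is defined through matrix coefficients of the regular representations, the sum $\sum_\beta j(a)(\gamma\beta^{-1}) j(b)(\beta)$ must be interpreted as the $\ell^2$-convergent expansion of $L^u(a)L^u(b)\delta_{s(\gamma)}$ rather than as a finite pointwise sum, and this requires some care with interchanging limits and matrix coefficients.
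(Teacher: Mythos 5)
Your proposal is correct and follows essentially the same route as the paper: both directions hinge on showing that $N=\{a\in C^*_r(G):j(a)\in C_0(\intiso(G))\}$ is an abelian $C^*$-subalgebra containing $M_r$ (for the forward implication) and on testing commutation against $C_0(\go)$ plus a density/continuity argument to place the commutant inside $N$ (for the converse). The only difference is one of exposition: where the paper simply cites Renault's Proposition~II.4.2 for the multiplicativity of $j$ and the resulting fact that $N$ is abelian, you re-derive the convolution formula from the regular representations, which is a legitimate (and correctly flagged) expansion of that cited step.
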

\begin{proof}
  Let $A := \{a \in C^*_r(G) : j(a) \in C_0(\intiso(G))\}$. Since $j$
  is continuous, $A$ is closed, and hence a $C^*$-subalgebra of
  $C^*_r(G)$. Renault proves that $j$ is multiplicative for the usual
  convolution product, and so $A$ is an abelian $C^*$-subalgebra of
  $C^*_r(G)$ containing $M_r$. So if $M_r$ is maximal abelian, then it
  is equal to $A$.

  Conversely suppose that $M_r = A$. Suppose that $a \in C^*_r(G)$
  commutes with every element of $M_r$. We must show that $a \in M_r$.
  Fix $\alpha \in G\setminus \Iso(G)$.  Since
  $r(\alpha) \not= s(\alpha)$ there exists $b \in C_0(\go)$ such that
  $b(s(\alpha)) = 1$, and $b(r(\alpha)) = 0$. Since $ab = ba$, it
  follows from \cite{ren:groupoid}*{Proposition~II.4.2(iii)} that
  \[
  0 = \big|j(ab - ba)(\alpha)\big| = \big|j(a)(\alpha)b(s(\alpha)) -
  b(r(\alpha))j(a)(\alpha)\big| = |j(a)(\alpha)|.
  \]
  So $j(a)$ vanishes on $G \setminus \Iso(G)$. Now for
  $\alpha \in G \setminus \intiso(G)$, there is a sequence
  $\alpha_n \in G \setminus \Iso(G)$ with $\alpha_n \to \alpha$. Since
  $j(a)$ is continuous, we deduce that
  $j(a)(\alpha) = \lim j(a)(\alpha_n) = 0$.
\end{proof}

\begin{proof}[Proof of Theorem~\ref{thm:masa}]
  Since $\intiso(G)$ is an abelian-group bundle,
  \cite{ren:xx13}*{Theorem~3.5} shows that it is amenable. Since
  $\intiso(G)$ is abelian, $M$ and $M_r$ are abelian.

  Now fix $a \in C^*_r(G)$ such that $j(a) \in C_0(\intiso(G))$. By
  Lemma~\ref{lem:A(P)masa}, to complete the proof it suffices to show
  that if either (\ref{masa-a})~or~(\ref{masa-b}) holds, then $a\in M_r$.

  First suppose that~(\ref{masa-a}) holds. Then Proposition~\ref{prp:M
    FCE} shows that restriction of compactly supported functions
  extends to a conditional expectation $\Psi_r: C^*_r(G)\to M_r$. Take
  $a_{n}\in C_{c}(G)$ with $a_{n}\to a$ in $\cs(G)$. We have
  $a_{n}-\Psi_r(a_{n})\to a-\Psi_r(a)$ in $\cs_r(G)$. Therefore
  $j\bigl(a_{n}-\Psi_r(a_{n})\bigr)\to j\bigl(a-\Psi_r(a)\bigr)$.  But
  $j$ is the identity on $C_{c}(G)$ and each $a_{n}-\Psi_r(a_{n})$
  vanishes on $\intiso(G)$.  Hence $j(a-\Psi_r(a))$ vanishes on
  $\intiso(G)$.  But then $j(a-\Psi_r(a))=0$.  Since $j$ is injective,
  we obtain $a = \Psi_r(a)$. Since $\Psi_r(\cs(G)) = M_r$, we deduce
  that $a \in M_r$ as required.

  Now suppose that~(\ref{masa-b}) holds. Let
  $A := \{b \in C^*_r(G) : j(b) \in C_0(\intiso(G))\}$. Let
  $\gamma : \widehat{H} \to \Aut(C^*_r(G))$ be the action such that
  $\gamma_\chi(f)(\alpha) = \chi(c(\alpha))f(\alpha)$ for
  $f \in C_c(G)$. By continuity, for $b \in C^*_r(G)$,
  $\chi \in \widehat{H}$ and $\alpha \in G$, we have
  \[
  j(\gamma_\chi(b))(\alpha) = \chi(c(\alpha)) j(b)(\alpha).
  \]
  Hence $A$ is invariant under $\gamma$. In particular, the Fourier
  coefficients
  \[
  \Phi_h(a) := \int_{\widehat{H}} \overline{\chi(h)}
  \gamma_\chi(a)\,d\mu(\chi)
  \]
  of $a$ belong to $A$. Since $H$ is amenable, $a$ can be expressed as
  a norm-convergent sum of the $\Phi_h(a)$ (see, for example, the
  argument of \cite[Theorem~5.6]{bc:jfaa15}), so we just need to show
  that each $\Phi_h(a) \in M_r$.

  Fix $h \in H$ and consider $a_h := \Phi_h(a)$. Fix
  $\varepsilon > 0$. It suffices to show that there exists
  $b \in C_c(\intiso(G))$ such that $\|a_h - b\|_r \le
  \varepsilon$.
  For this, observe that
  $\supp(j(a_h)) = \supp(a) \cap c^{-1}(h) \subseteq
  \overline{\intiso(G)} \cap c^{-1}(h)$.
  Since $c$ is injective on each $G^x_x$, this set is a bisection. The
  $I$-norm is finite and agrees with the supremum norm on $C_0(V)$ for
  any bisection $V$. Since the $I$-norm dominates the full norm, and
  hence the reduced norm, on $C_c(G)$, it follows that for
  $b \in j\inv(C_0(\{\alpha : j(a_h)(\alpha) \not= 0\}))$, we have
  $\|j(b)\|_\infty = \|j(b)\|_I \ge \|b\|_r \ge \|j(b)\|_\infty$,
  giving equality throughout. Thus while it is not true for arbitrary
  $a \in C_c(G)$ that $\|j(a)\|_\infty=\|a\|_r$, we do have
  \begin{equation}
    \label{eq norm eq}
    \|j(b)\|_\infty=\|b\|_r\quad \text{for all $b \in
    j\inv(C_0(\set{\alpha : j(a_h)(\alpha) \not= 0}$})).
  \end{equation}

  Fix $\varepsilon > 0$. The sets
  $U_\varepsilon := \{\alpha \in G : |j(a_h)(\alpha)| \ge
  \varepsilon\}$
  and
  $U_{\varepsilon/2} := \{\alpha \in G : |j(a_h)(\alpha)| \ge
  \varepsilon/2\}$
  are compact subsets of
  $\{\alpha : j(a_h)(\alpha) \not= 0\} \subseteq \intiso(G)$. Since
  $\overline{\intiso(G)} \cap c^{-1}(h)$ is a bisection, there exists
  $g \in C_c(\go, [0,1])$ such that $g(r(\alpha)) = 1$ for all
  $\alpha \in U_\varepsilon$, and
  $\supp(g) \subseteq r(U_{\varepsilon/2})$. We have
  $g * a_h = g * j(a_h) \in C_c(\intiso(G))$. Furthermore,
  $a_h - g * a_h \in C_0(\{\alpha : j(a_h)(\alpha) \not= 0\})$. By the
  preceding paragraph, we therefore have
  \[
  \|a_h - g * a_h\|_r = \|j(a_h - g * a_h)\|_\infty = \|j(a_h) - g *
  a_h\|_\infty.
  \]
  By construction of $g$, we have $(g * a_h)(\alpha) = j(a_h)(\alpha)$
  whenever $|a_h(\alpha)| \ge \varepsilon$, and
  $|j(a_h - g * a_h)(\alpha)| = (1 - g(r(\alpha)))|a_h(\alpha)| <
  |a_h(\alpha)|$
  for all $\alpha$. Thus $\|j(a_h) - g * a_h\|_\infty \le \varepsilon$
  as required.
\end{proof}

The previous theorem resolves some issues left unanswered in \cite{ren:imsb08} and
\cite{nagrez:pams14}. For the next result, observe that when $\intiso(G)$ is abelian, the
homomorphisms $\iota$ and $\iota_r$ of~\eqref{eq:iota} are injective by
Lemma~\ref{lem:inj} and \cite[Proposition~1.9]{Phil05} respectively.

\begin{cor}\label{cor:pseudodiagonal}
  Let $G$ be a locally compact Hausdorff \'etale groupoid and suppose
  that $\intiso(G)$ is abelian. The following are equivalent:
  \begin{enumerate}
  \item\label{it:pd} $\iota_r(C^*(\intiso(G))) \subset C^*_r(G)$ is a
    pseudo-diagonal in the sense of \cite{nagrez:pams14}*{page~268};
  \item\label{it:csa} $\iota_r(C^*(\intiso(G))) \subset C^*_r(G)$ is a
    Cartan subalgebra in the sense of
    \cite{ren:imsb08}*{Definition~4.5}; and
  \item\label{it:iic} $\intiso(G)$ is closed in $G$.
  \end{enumerate}
  In particular, if $G$ is amenable and $\intiso(G)$ is closed, then
  $\iota(C^*(\intiso(G))) \subseteq C^*(G)$ is both a pseudo-diagonal
  and a Cartan subalgebra.
\end{cor}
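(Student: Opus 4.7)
I plan to prove the equivalences as a cycle, (\ref{it:iic})$\Rightarrow$(\ref{it:csa})$\Rightarrow$(\ref{it:pd})$\Rightarrow$(\ref{it:iic}), and then deduce the final assertion by observing that amenability of $G$ (together with the amenability of $\intiso(G)$ supplied by \cite{ren:xx13}*{Theorem~3.5}) forces $C^*(G) = C^*_r(G)$ and $C^*(\intiso(G)) = C^*_r(\intiso(G))$, so $M$ and $M_r$ coincide and the conclusions transfer from the reduced setting to the full one.

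The hard direction is (\ref{it:iic})$\Rightarrow$(\ref{it:csa}). Assuming $\intiso(G)$ is closed, Theorem~\ref{thm:masa}(\ref{masa-a}) already tells me that $M_r$ is maximal abelian, and Proposition~\ref{prp:M FCE}(b) produces a faithful conditional expectation $\Psi_r : C^*_r(G) \to M_r$. Since $\go \subseteq \intiso(G)$, we have $C_0(\go) \subseteq M_r$, and so $M_r$ contains an approximate unit of $C^*_r(G)$. What remains is regularity: the normalizers of $M_r$ must densely span $C^*_r(G)$. I would show that every $f \in C_c(B)$ supported in a precompact open bisection $B$ is a normalizer. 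Writing the double convolution $(f * g * f^*)(\gamma)$ for $g \in C_c(\intiso(G))$ and using the bisection property of $B$, the only composable triples that contribute reduce to $\gamma = \alpha \eta \alpha^{-1}$ with $\alpha \in B$ and $\eta \in \intiso(G)_{s(\alpha)}$; Lemma~\ref{lem:normal} then guarantees $\gamma \in \intiso(G)_{r(\alpha)}$. Hence $f * g * f^* \in C_c(\intiso(G)) \subseteq M_r$, and a symmetric argument handles $f^* * g * f$. Since such $f$'s span a dense subspace of $C^*_r(G)$, regularity follows, and $M_r$ is a Cartan subalgebra in the sense of \cite{ren:imsb08}*{Definition~4.5}.

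For (\ref{it:csa})$\Rightarrow$(\ref{it:pd}) I invoke the last result promised in this section: every Cartan subalgebra of any $C^*$-algebra is a pseudo-diagonal. For (\ref{it:pd})$\Rightarrow$(\ref{it:iic}), the definition of a pseudo-diagonal in \cite{nagrez:pams14} includes the existence of a conditional expectation from $C^*_r(G)$ onto $M_r$, and Proposition~\ref{prp:M FCE}(a) then forces $\intiso(G)$ to be closed. The main obstacle is really the regularity verification in (\ref{it:iic})$\Rightarrow$(\ref{it:csa}); although it is only a bookkeeping calculation with convolutions, it is the one step that genuinely uses the structure of $\intiso(G)$ as a normal group bundle (Lemma~\ref{lem:normal}) rather than being an essentially abstract consequence of Theorem~\ref{thm:masa} and Proposition~\ref{prp:M FCE}.
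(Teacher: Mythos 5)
Your proposal is correct, and for the Cartan direction it follows the paper's own argument almost exactly: Theorem~\ref{thm:masa} for maximality, Proposition~\ref{prp:M FCE} for the faithful expectation, $C_0(\go)\subseteq M_r$ for the approximate unit, and conjugation-invariance of $\intiso(G)$ (Lemma~\ref{lem:normal}) to show that functions supported on open bisections normalize $M_r$, hence regularity. Where you genuinely diverge is the pseudo-diagonal property: you route (\ref{it:iic})$\Rightarrow$(\ref{it:csa})$\Rightarrow$(\ref{it:pd}) through Lemma~\ref{lem:cartan->pd} (every Cartan subalgebra is a pseudo-diagonal), whereas the paper proves (\ref{it:iic})$\Rightarrow$(\ref{it:pd}) directly by showing that the pure states of $M_r$ factoring through $C^*_r(G^u_u)$ for units $u$ with $G^u_u = \intiso(G)_u$ are weak$^*$-dense in the pure state space --- this uses the bundle structure of $\widehat{M_r}$ over $\go$ from \cite{mrw:tams96} together with the density statement of Lemma~\ref{lem:technical}(\ref{it:dense}) --- and then invoking Theorem~\ref{thm:uniqueness}(\ref{it:M state ext}) to obtain the canonical almost extension property. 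Both routes are legitimate: the paper itself remarks after Lemma~\ref{lem:cartan->pd} that it furnishes an alternative proof, and that lemma is proved independently of this corollary (via Renault's reconstruction theorem and an Anderson-type compression argument), so there is no circularity. The trade-off, also noted in the paper, is that the direct argument identifies concretely which pure states of $M_r$ have unique extension, information your route discards by passing through the abstract Weyl-groupoid machinery. Your treatment of (\ref{it:pd})$\Rightarrow$(\ref{it:iic}) via the conditional-expectation clause of the definition and Proposition~\ref{prp:M FCE}, and your reduction of the final assertion to the amenable case, match the paper's intent.
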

\begin{proof}
  Both (\ref{it:pd})~and~(\ref{it:csa}) imply by definition that there
  is a conditional expectation from $C^*_r(G)$ to
  $\iota_r(C^*(\intiso(G)))$, and then the ``only if" implication in
  the first statement of Theorem~\ref{prp:M FCE}
  gives~(\ref{it:iic}). So it suffices to prove that~(\ref{it:iic})
  implies (\ref{it:pd})~and~(\ref{it:csa}).

  Suppose, then, that $\intiso(G)$ is closed. Let
  $M := \iota_r(C^*(\intiso(G)))$.  Theorem~\ref{thm:masa} implies
  that $M$ is a maximal abelian subalgebra of $C^*_r(G)$.  Let
  $S_{\intiso(G)}$ be the set of pure states of $M$ that factor
  through $C^*_r(G^u_u)$ for some unit $u$ with
  $G^u_u = \intiso(G)_u$. We claim that $S_{\intiso(G)}$ is dense in
  the set of all pure states of $M$; that is, the corresponding set
  $S_{\intiso(G)}^{\wedge}$ is dense in the Gelfand dual $\widehat M$
  of $M$. By \cite{mrw:tams96}*{Corollary~3.4} and the subsequent
  remarks, and by \cite{mrw:tams96}*{Proposition~3.6}, the map
  $p:\widehat M\to\go$ is an open map making $\intiso(G)$ into an
  abelian-group bundle over $\go$. By Lemma~\ref{lem:technical}
  part~(\ref{it:dense}), it suffices to show that if $D\subset\go$ is
  dense then $p^{-1}(D)$ is dense in $\widehat M$. To see this, fix
  $\sigma\in\widehat M$.  There exist $u_{n}\in D$ such that
  $u_{n}\to p(\sigma)$.  Since $p$ is open, we can invoke
  \cite{wil:crossed}*{Proposition~1.15}, pass to a subsequence and
  relabel so that there exist $\sigma_{n}\in\widehat M$ such that
  $\sigma_{n}\to \sigma$ and $p(\sigma_{n})=u_{n}$.  This suffices and
  the claim is established.

  Now Theorem~\ref{thm:uniqueness}(\ref{it:M state ext}) implies that
  $S_{\intiso(G)}$ is a weak-$*$ dense set of pure states $\phi$ of
  $M$ for which $\phi \circ \Psi$ is the unique extension of $\phi$ to
  a state of $C^*(G)$. In the terminology of
  \cite{nagrez:pams14}*{page~266} (the definition just below
  Remark~2.4), we have just established that $M \subset C^*(G)$ has
  the canonical almost extension property with associated expectation
  $\Psi$. Proposition~\ref{prp:M FCE} shows that $\Psi$ is a faithful
  conditional expectation, and so $M$ is a pseudo-diagonal as defined
  in \cite{nagrez:pams14}*{p.~268}.

  To see that $M$ is also a Cartan subalgebra, we have to check that
  it is a regular maximal abelian subalgebra containing an approximate
  identity for $C^*(G)$ and admitting a faithful conditional
  expectation. It contains an approximate identity because
  $C_0(\go) \subset M$ does. We have already checked that it is a
  maximal abelian subalgebra and admits a faithful conditional
  expectation. For regularity, we must show that
  $\sset{n \in C^*(G) : n^* M n \cup n M n^* \subset M}$ generates
  $C^*(G)$ as a $C^*$-algebra. For this, observe that if
  $a \in C_c(\intiso(G))$ is supported in an open bisection
  $U \subset \intiso(G)$ and $n \in C_c(G)$ is supported in an open
  bisection $B$ in $G$, then $n^* a n$ is supported in $B^{-1} U B$
  and $n a n^*$ is supported in $B U B^{-1}$. Since $\intiso(G)$ is
  invariant under conjugation in $G$, it follows that both $n^* a n$
  and $n a n^*$ belong to $C_c(\intiso(G))$. Now continuity and
  linearity shows that if $a \in M$ and $n \in C_c(G)$ is supported on
  an open bisection, then $n^* a n, n a n^* \in M$.  Since
  $\sset{n \in C_c(G) : n\text{ is supported on an open bisection}}$
  generates $C^*(G)$, we deduce that $M$ is regular, and hence Cartan.
\end{proof}

In particular, we obtain from the above a complete answer to the question asked in
\cite{bnr:jfa14}*{Remark~4.11}. For background and notation for $k$-graphs and their
infinite-path spaces, see~\cite{kumpas:nyjm00}. For our purposes it suffices to recall
that each $k$-graph $\Lambda$ (with degree map $d : \Lambda \to \N^k$) has an
infinite-path space $\Lambda^\infty$, that if $x \in \Lambda^\infty$ and $\lambda \in
\Lambda r(x)$, then we can form the infinite path $\lambda x$, and that the $k$-graph
groupoid $G = G_\Lambda$ consists of triples of the form $(\mu x, d(\mu) - d(\nu), \nu
x)$ where $x \in \Lambda^\infty$ and $\mu,\nu \in \Lambda r(x)$.

\begin{cor}[Yang \cite{yang:xxxx}]\label{cor:k-graph}
  Let $\Lambda$ be a row-finite $k$-graph with no sources, and let $G$
  be the groupoid associated to $\Lambda$ in
  \cite{kumpas:nyjm00}. Then
  $M := \clsp\sset{s_\mu s^*_\nu : \mu x = \nu x \text{ for every } x
    \in \Lambda^\infty}$
  is a maximal abelian subalgebra of $C^*(\Lambda)$. The following are
  equivalent:
  \begin{enumerate}
  \item $M$ is a pseudo-diagonal;
  \item $M$ is a Cartan subalgebra;
  \item the set $\sset{(\mu x, d(\mu) - d(\nu), \nu x) \in G : \mu y = \nu y \text{
      for all } y \in r(x)\Lambda^\infty}$ is closed.
  \end{enumerate}
\end{cor}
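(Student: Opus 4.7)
The plan is to deduce everything from Theorem~\ref{thm:masa} and Corollary~\ref{cor:pseudodiagonal} applied to the $k$-graph groupoid $G_{\Lambda}$. Since $\Lambda$ is row-finite with no sources, $G_{\Lambda}$ is amenable, so $C^{*}(\Lambda) \cong C^{*}(G_{\Lambda}) = C^{*}_{r}(G_{\Lambda})$, and by Remark~4.11 of \cite{bnr:jfa14} the subalgebra $M$ in the statement coincides with $\iota_{r}(C^{*}_{r}(\intiso(G_{\Lambda})))$ of Theorem~\ref{thm:uniqueness}. The isotropy group $(G_\Lambda)^x_x$ at $x$ consists of triples $(\mu x, d(\mu)-d(\nu),\nu x)$ with $\mu x = \nu x$, and is an abelian subgroup of $\Z^{k}$ under the degree map; hence $\intiso(G_{\Lambda})$ is an abelian (open) subgroupoid, and Theorem~\ref{thm:masa} is applicable.

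To show that $M$ is maximal abelian, I would verify hypothesis~(\ref{masa-b}) of Theorem~\ref{thm:masa} using the canonical degree cocycle $c : G_{\Lambda} \to \Z^{k}$, $c(\mu x, d(\mu)-d(\nu), \nu x) = d(\mu)-d(\nu)$. This is a continuous groupoid homomorphism into the discrete abelian group $\Z^{k}$, and its restriction to $(G_\Lambda)^x_x$ is injective because distinct elements of the isotropy at $x$ are distinguished precisely by their degree. Theorem~\ref{thm:masa} then yields that $M$ is maximal abelian in $C^{*}_{r}(G_{\Lambda}) = C^{*}(\Lambda)$, regardless of whether $\intiso(G_{\Lambda})$ is closed.

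Next I would identify the set in condition~(c) of the corollary with $\intiso(G_{\Lambda})$. Recall that the topology on $G_{\Lambda}$ has a basis of compact open bisections of the form
\begin{equation*}
  Z(\mu,\nu) = \set{(\mu y, d(\mu)-d(\nu), \nu y) : y \in s(\mu)\Lambda^{\infty}},
\end{equation*}
with $s(\mu)=s(\nu)$. An element $(\mu x, d(\mu)-d(\nu), \nu x)$ lies in $\intiso(G_{\Lambda})$ if and only if some such basic neighborhood is contained in $\Iso(G_{\Lambda})$, which holds precisely when $\mu y = \nu y$ for every $y \in s(\mu)\Lambda^{\infty} = r(x)\Lambda^{\infty}$. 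Thus the set in~(c) is exactly $\intiso(G_{\Lambda})$, so~(c) says that $\intiso(G_{\Lambda})$ is closed.

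Finally, having established that $M$ is always maximal abelian, I would invoke Corollary~\ref{cor:pseudodiagonal}: since $G_{\Lambda}$ is amenable so that $C^*(\Lambda) = C^*_r(G_\Lambda)$, conditions~(a), (b), (c) of Corollary~\ref{cor:k-graph} are exactly the three equivalent conditions in Corollary~\ref{cor:pseudodiagonal}. The only mildly subtle point, and the step I'd check most carefully, is the identification in the previous paragraph of the explicit set in~(c) with $\intiso(G_{\Lambda})$; everything else is a direct specialization of the groupoid-level results.
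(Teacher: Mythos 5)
Your proposal is correct and follows essentially the same route as the paper: identify $M$ with $\iota(C^*(\intiso(G_\Lambda)))$ via Remark~4.11 of \cite{bnr:jfa14}, verify hypothesis \partref{2} of Theorem~\ref{thm:masa} using the canonical degree cocycle $(x,m,y)\mapsto m$ (injective on each $G^x_x$), and then apply Theorem~\ref{thm:masa} and Corollary~\ref{cor:pseudodiagonal}. Your explicit identification of the set in condition~(c) with $\intiso(G_\Lambda)$ and your remark on amenability of $G_\Lambda$ are details the paper leaves implicit, but they are the right ones to check.
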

\begin{proof}
  Let $G$ be the groupoid associated to $\Lambda$ in
  \cite{kumpas:nyjm00}.  As observed in Remark~4.11 of
  \cite{bnr:jfa14}, the isomorphism $C^*(\Lambda) \cong C^*(G)$ of
  \cite{kumpas:nyjm00}*{Corollary~3.5} carries
  $\clsp\sset{s_\mu s^*_\nu : \mu y = \nu y \text{ for every } y \in
    s(\mu)\Lambda^\infty}$
  to $\iota(C^*(\intiso(G))) \subset C^*(G)$. The canonical cocycle
  $(x,m,y) \mapsto m$ from $G$ to $\Z^k$ is injective on each $G^x_x$,
  so the result follows from Theorem~\ref{thm:masa} and
  Corollary~\ref{cor:pseudodiagonal}.
\end{proof}

The preceding result begs the question: is the interior of the
isotropy always closed in the infinite-path groupoid of a $k$-graph?
The answer when $k=1$ is ``yes", as can be deduced from
Proposition~\ref{prp:M FCE} and
\cite{nagrez:jlms12}*{Theorem~3.6}. However, this happy situation does
not persist for $k \ge 2$, as the next example shows.

\begin{example}
  \begin{figure}[t]
    \centering
    \[
  \begin{tikzpicture}[scale=1.5, >=stealth, decoration={markings,
      mark=at position 0.5 with {\arrow{>}}}]
    \node[circle, inner sep=1pt, fill=black] (v) at (0,0) {};
    \node[circle, inner sep=1pt, fill=black] (u) at (2,1) {};
    \node[circle, inner sep=1pt, fill=black] (w) at (2,-1) {};
    \draw[blue, postaction=decorate] (v) .. controls +(0.75,0.75) and
    +(-0.75,0.75) .. (v) node[above, pos=0.5, black]
    {\small$e_b$}; \draw[blue, postaction=decorate] (u) .. controls
    +(0.75,0.75) and +(-0.75,0.75) .. (u) node[above, pos=0.5, black]
    {\small$f_b$}; \draw[blue, postaction=decorate] (w) .. controls
    +(0.7,0.7) and +(-0.65,0.65) .. (w) node[circle, inner sep=0.1pt,
    above, pos=0.5, black]
    {\small$g_b$}; \draw[blue, postaction=decorate] (w) .. controls
    +(1,1) and +(-1,1) .. (w) node[circle, inner sep=0.1pt, above,
    pos=0.5, black]
    {\small$h_b$}; \draw[red, dashed, postaction=decorate] (v)
    .. controls +(0.75,-0.75) and +(-0.75,-0.75) .. (v) node[below,
    pos=0.5, black]
    {\small$e_r$}; \draw[red, dashed, postaction=decorate] (u)
    .. controls +(0.75,-0.75) and +(-0.75,-0.75) .. (u) node[below,
    pos=0.5, black]
    {\small$f_r$}; \draw[red, dashed, postaction=decorate] (w)
    .. controls +(0.65,-0.65) and +(-0.65,-0.65) .. (w) node[circle,
    inner sep=0.1pt, below, pos=0.5, black]
    {\small$g_r$}; \draw[red, dashed, postaction=decorate] (w)
    .. controls +(1,-1) and +(-1,-1) .. (w) node[circle, inner
    sep=0.1pt, below, pos=0.5, black] {\small$h_r$};
    \draw[blue, postaction=decorate, out=200, in=40] (u) to
    node[above, pos=0.5, black]
    {$\alpha_b$} (v); \draw[blue, postaction=decorate, out=140,
    in=340] (w) to node[above, pos=0.5, black]
    {$\beta_b$} (v); \draw[red, dashed, postaction=decorate, out=220,
    in=20] (u) to node[below, pos=0.5, black]
    {$\alpha_r$} (v); \draw[red, dashed, postaction=decorate, out=160,
    in=320] (w) to node[below, pos=0.5, black] {$\beta_r$} (v);
  \end{tikzpicture}
  \]
    \caption{A $2$-graph such that $\intiso(G)$ is not closed.}
    \label{fig:2-graph}
  \end{figure}

  Consider the 2-coloured graph in Figure~\ref{fig:2-graph} where the
  factorisation rules are given by
  \begin{gather*}
    e_b \alpha_r = e_r \alpha_b,\ e_b\beta_r = e_r\beta_b,\
    \alpha_bf_r = \alpha_r f_b,\ f_b f_r = f_r f_b,\
    \beta_b g_r = \beta_r g_b,\ \beta_b h_r = \beta_r h_b,\\
    g_bg_r = g_r g_b,\ g_b h_r = h_rg_b,\ h_bg_r = g_rh_b,\text{ and }
    h_b h_r = h_r h_b.
  \end{gather*}

  Let $\Lambda$ be the resulting $2$-graph and $w=r(g_r)$. By
  construction, $w\Lambda$ is isomorphic to the $2$-graph
  $B_2 \times B_2$ where $B_2$ is the bouquet of two loops. In
  particular, $w\Lambda$ is aperiodic. Fix an infinite path
  $y_0 \in w\Lambda^\infty$ such that $\sigma^m(y_0) = \sigma^n(y_0)$
  only when $m = n \in \N^2$. Put
  \[
  y := \beta_b y_0.
  \]
  So $\sigma^m(y) = \sigma^n(y)$ implies $m = n$, and
  $r(y) = r(\beta_b)=:v$.

  Note that
  $Z(\alpha_b) = Z(\alpha_r) = \{\alpha_b (f_bf_r)^\infty\} =
  \{\alpha_r (f_bf_r)^\infty\}$. Also
  \[
  \sigma^{(1,0)}(\alpha_b (f_bf_r)^\infty) = (f_bf_r)^\infty =
  \sigma^{(0,1)}(\alpha_b (f_bf_r)^\infty),
  \]
  and we deduce that $Z(\alpha_b, \alpha_r)$ is contained in
  $\intiso(G_\Lambda)$. Let
  \[
  x := \alpha_b (f_bf_r)^\infty.
  \]
  Also note that
  \[
  z := (e_be_r)^\infty
  \]
  satisfies $\sigma^{(1,0)}(z) = \sigma^{(0,1)}(z)$.

  For each $n$, let
  \[
  \gamma_n := \big((e_b e_r)^n e_b x, (1,-1), (e_be_r)^n e_r x\big)
  \in G_\Lambda.
  \]
  Using that $Z(\alpha_b, \alpha_r) = \{(x, (1,-1), x)\}$, we see that
  \[
  Z((e_b e_r)^n e_b, v) Z(\alpha_b, \alpha_r) Z(v, (e_b e_r)^n e_b) =
  \{\gamma_n\},
  \]
  and so the $\gamma_n$ all belong to $\intiso(G_\Lambda)$.

  The sets $Z((e_b e_r)^n e_b, (e_be_r)^n e_r)$ form a decreasing base
  of neighbourhoods of $\gamma := \big(z, (1,-1), z\big)$, and each
  $\gamma_n \in Z((e_b e_r)^n e_b, (e_be_r)^n e_r)$ Hence
  $\gamma_n \to \gamma$, giving
  $\gamma \in \overline{\intiso(G_\Lambda)}$.

  The elements
  \[
  \gamma'_n := \big((e_b e_r)^n e_b y, (1,-1), (e_be_r)^n e_r y\big)
  \in G_\Lambda
  \]
  satisfy $\gamma'_n \in Z((e_b e_r)^n e_b, (e_be_r)^n e_r)$ for each
  $n$, and so $\gamma'_n \to \gamma$. We claim that
  $\gamma'_n \not\in \Iso(G_\Lambda)$ for each $n$.  To see this, we
  calculate:
  \[
  \sigma^{((n+1),(n+1))}(r(\gamma'_n)) = \sigma^{(0,1)}(y) \not=
  \sigma^{(1,0)}(y) = \sigma^{((n+1),(n+1))}(s(\gamma'_n)).
  \]
  so $r(\gamma'_n) \not= s(\gamma'_n)$. Hence
  $\gamma = \lim \gamma'_n \not\in \intiso(G_\Lambda)$.
\end{example}

\begin{remark}
  The preceding example, combined with Corollary~\ref{cor:k-graph},
  shows that there exist $k$-graphs $\Lambda$ such that the subalgebra
  $M$ of $C^*(\Lambda)$ described above is maximal abelian and has the
  property that every representation of $C^*(\Lambda)$ that is
  faithful on $M$ is faithful, but is nevertheless not the range of a
  conditional expectation of $C^*(\Lambda)$.
\end{remark}

To finish, we clarify the relationship between Cartan subalgebras and
pseudo-diagonals.  On page~890 of \cite{nagrez:jlms12}, the authors
comment that the maximal abelian subalgebra that they construct in
each a graph algebra $C^*(E)$ is in fact a Cartan subalgebra. In their
subsequent paper \cite{nagrez:pams14}, they show that it is a
pseudo-diagonal. The relationship in general between these two
conditions is not addressed. We show that every Cartan subalgebra $B$
of a $C^*$-algebra $A$ is a pseudo-diagonal in $A$. This provides an
alternative proof of the assertion that $C^*(\intiso(G))$ is a
pseudo-diagonal in Corollary~\ref{cor:pseudodiagonal}, although the
proof via Cartan subalgebras provides less-direct information about
which pure states of $C^*(\intiso(G))$ have unique extension.

\begin{lemma}\label{lem:cartan->pd}
  Let $B$ be a Cartan subalgebra of a $C^*$-algebra $A$. Then $B$ is a
  pseudo-diagonal in $A$.
\end{lemma}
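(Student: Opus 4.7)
The plan is to invoke Renault's structure theorem for Cartan pairs and then adapt the argument of Theorem~\ref{thm:uniqueness}(\ref{it:M state ext}) to the twisted setting. By \cite{ren:imsb08}*{Theorem~5.9}, every Cartan pair $(A,B)$ is isomorphic to one of the form $(C^*_r(G,\Sigma), C_0(G^{(0)}))$ where $G$ is a topologically principal Hausdorff \'etale groupoid, $\Sigma$ is a twist over $G$, and the faithful conditional expectation $P : A \to B$ corresponds under the isomorphism to restriction of functions. I would therefore identify $A$ with $C^*_r(G,\Sigma)$ and $B$ with $C_0(\go)$ throughout; note that topological principality forces $\intiso(G) = \go$, so $B$ plays the role of the subalgebra $M_r$ of Section~\ref{sec:abstractuniqueness}.

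I would next verify the canonical almost extension property of \cite{nagrez:pams14}*{p.~266}: that the set of pure states of $B$ with unique extension to $A$ is weak-$*$ dense in the pure state space of $B$. Pure states of $B \cong C_0(\go)$ are the evaluation functionals $\phi_u$ at units $u \in \go$, and since $G$ is topologically principal the set $U := \{u \in \go : G^u_u = \{u\}\}$ is dense in $\go$. Fixing $u \in U$, I would show that $\phi_u$ extends uniquely to $A$ by running the cutdown argument from the proof of Theorem~\ref{thm:uniqueness}(\ref{it:M state ext}) in the twisted setting: given $a \in A$ and $\varepsilon > 0$, one uses a version of Lemma~\ref{lem:technical}(\ref{it:cutdown}) adapted to twists to find $b \in B^+$ with $\|b\| = b(u) = 1$ and an element $c \in B$ with $\|bab - c\| < \varepsilon$, and then applies Anderson's extension criterion as at the start of the proof of Theorem~\ref{thm:uniqueness}. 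The twist plays no role in the compression step because the compressor $b$ lies in $C_0(\go)$, over which $\Sigma$ is canonically trivial.

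Combining maximal abelianness of $B$ (from the Cartan hypothesis), the existence of the faithful conditional expectation $P$, and the density of unique-extension pure states just established, yields the canonical almost extension property for $B \subseteq A$ with associated expectation $P$, which is exactly the pseudo-diagonal condition of \cite{nagrez:pams14}*{p.~268}. The one nontrivial piece of bookkeeping---and the place where I expect to concentrate the work---is checking that the bisection-based proof of Lemma~\ref{lem:technical}(\ref{it:cutdown}) and the cutdown of Lemma~\ref{lem:statecutdown}(\ref{it:redcutdown}) carry over from $C^*_r(G)$ to $C^*_r(G,\Sigma)$. Because each open bisection of $G$ lifts to an open bisection of $\Sigma$ and multiplication by elements of $C_0(\go)$ respects the local trivialisations, this transfer is essentially a matter of bookkeeping, but it is the only place where the argument differs from the untwisted case.
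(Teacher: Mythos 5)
Your proposal is correct and follows essentially the same route as the paper's own proof: invoke Renault's reconstruction theorem to identify $(A,B)$ with $(C^*_r(G,\Sigma), C_0(G^{(0)}))$ for a topologically principal $G$, use density of the units with trivial isotropy, and run the Anderson-style cutdown argument of Theorem~\ref{thm:uniqueness}(\ref{it:M state ext}) in the twisted setting. The paper likewise treats the transfer to the twist as routine, carrying out exactly the bisection-compression bookkeeping you flag as the only nontrivial step.
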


For the proof of Lemma~\ref{lem:cartan->pd}, we need to recall some
ideas from \cite{ren:imsb08}. A \emph{twist} over a Hausdorff \'etale
groupoid $G$ is a Hausdorff groupoid $\Sigma$ equipped with an
injective groupoid homomorphism $i : \T \times \go \to \Sigma$ and a
surjective groupoid homomorphism $q : \Sigma \to G$ such that the
kernel $\sset{\gamma \in \Sigma : q(\gamma) \in \go}$ of $q$ is the
image of $i$. We write $C_c(\Sigma, G)$ for the convolution algebra
\[
\sset{f \in C_c(\Sigma) : f(i(z, r(\gamma))\gamma) = zf(\gamma)\text{
    for all } \gamma \in \Sigma\text{ and } z \in \T}.
\]
There is an inclusion
$\iota : C_c(\go) \hookrightarrow C_c(G, \Sigma)$ such that each
$\iota(f)$ is supported on $i(\T \times \go)$ and satisfies
$\iota(f)(i(z,x)) = zf(x)$ for $z \in \T$ and $x \in \go$. We identify
$C_c(\go)$ with its image under $\iota$.

Also recall that a groupoid $G$ is \emph{topologically principal} if
the set of units in $\go$ with trivial isotropy is dense in $\go$.
That is, $\overline{\sset{x\in \go: G^x_x = \sset x}}=\go$. It is
worth pointing out that the condition we are here calling
topologically principal has gone under a variety of names in the
literature and that those names have not been used consistently (see
\cite{bcfs:sf14}*{Remark~2.3}).

\begin{proof}[Proof of Lemma~\ref{lem:cartan->pd}]
  We must show that $B$ is maximal abelian in $A$, that there is a
  faithful conditional expectation from $A$ onto $B$, and that the set
  of pure states of $B$ with unique extension to $A$ is weak$^*$-dense
  in the set of pure states of $B$.

  By \cite{ren:imsb08}*{Theorem~5.9(i)}, there exist a topologically
  principal \'etale groupoid $G$ and a twist $\Sigma$ over $G$ for
  which there exists an isomorphism $\pi : A \to C^*_r(G, \Sigma)$
  that carries $B$ to the canonical copy of $C_0(\go)$. So it suffices
  to show that there is a dense set of points $x$ in $\go$ for which
  the state $\widehat{x}(f) = f(x)$ on $C_0(\go)$ has unique extension
  to $C^*_r(G, \Sigma)$. Since $G$ is topologically principal, the set
  $\sset{x \in \go : G^x_x = \{x\}}$ is dense, so it suffices to show
  that if $G^x_x = \{x\}$, then $\widehat{x}$ has unique extension.

  The argument is very similar to that of
  Theorem~\ref{thm:uniqueness}(\ref{it:M state ext}), so we just give
  a quick sketch. By the argument preceding
  \cite{and:tams79}*{Theorem~3.2} we just have to show that for
  $a \in C_c(G, \Sigma)$ there exists a positive element
  $b \in C_0(\go)$ such that $\widehat{x}(b) = \|b\| = 1$ and
  $bab \in C_0(\go)$. Write $q : \Sigma \to G$ for the quotient
  map. Fix $a \in C_c(G, \Sigma)$. Use a partition of unity to express
  $a = \sum_{B \in F} a_B$ where $F$ is a finite collection of
  precompact open bisections of $G$ and each $a_B \in
  C_c(q^{-1}(B))$.
  For each $B$ such that $x \not\in B$, since $G^x_x = \{x\}$, there
  is a neighbourhood $V_B$ of $x$ such that $V_B B V_B =
  \emptyset$.
  And for $B$ such that $x \in B$, there is a neighbourhood $V_B$ of
  $x$ such that $V_B B V_B = V_B \subset \go$. Let
  $V := \bigcap_B V_B$ and choose $b \in C_c(V)$ such that $b(x) = 1$;
  that is $\widehat{x}(b) = 1$. Then
  $bab = \sum_{x \in B} b a_B b \in C_0(\go)$ by choice of the $V_B$.
\end{proof}

%%%%%%%%%%%%%%%%%%%%%%%%%%%%%%%%%%
%%%%%%% End Matter %%%%%%%%%%%%%%%
%%%%%%%%%%%%%%%%%%%%%%%%
% \bibliographystyle{amsxport}
% %
% \bibliography{references-nov01}

\def\noopsort#1{}\def\cprime{$'$} \def\sp{^}
% \bib, bibdiv, biblist are defined by the amsrefs package.
\begin{bibdiv}
  \begin{biblist}

    \bib{and:tams79}{article}{ author={Anderson, Joel},
      title={Extensions, restrictions, and representations of states
        on {$C^{\ast} $}-algebras}, date={1979}, ISSN={0002-9947},
      journal={Trans. Amer. Math. Soc.}, volume={249}, number={2},
      pages={303\ndash 329}, url={http://dx.doi.org/10.2307/1998793},
      review={\MR{525675 (80k:46069)}}, }

    \bib{bc:jfaa15}{article}{ author={B{\'e}dos, Erik}, author={Conti,
        Roberto}, title={Fourier series and twisted
        ${\rm C}^{\ast}$-crossed products}, journal={J. Fourier
        Anal. Appl.}, volume={21}, date={2015}, number={1},
      pages={32--75}, issn={1069-5869}, review={\MR{3302101}},
      doi={10.1007/s00041-014-9360-3}, }

    \bib{bla:operator06}{book}{ author={Blackadar, Bruce},
      title={Operator algebras}, series={Encyclopaedia of Mathematical
        Sciences}, publisher={Springer-Verlag}, address={Berlin},
      date={2006}, volume={122}, ISBN={978-3-540-28486-4;
        3-540-28486-9}, note={Theory of $C{^{*}}$-algebras and von
        Neumann algebras, Operator Algebras and Non-commutative
        Geometry, III}, review={\MR{2188261 (2006k:46082)}}, }

    \bib{bcfs:sf14}{article}{ author={Brown, Jonathan~H.},
      author={Clark, Lisa~Orloff}, author={Farthing, Cynthia},
      author={Sims, Aidan}, title={Simplicity of algebras associated
        to {\'e}tale groupoids}, journal={Semigroup Forum},
      volume={88}, year={2014}, number={2}, pages={433--452}, }

    \bib{bnr:jfa14}{article}{ author={Brown, Jonathan~H.},
      author={Nagy, Gabriel}, author={Reznikoff, Sarah}, title={A
        generalized {C}untz-{K}rieger uniqueness theorem for
        higher-rank graphs}, date={2014}, ISSN={0022-1236},
      journal={J. Funct. Anal.}, volume={266}, number={4},
      pages={2590\ndash 2609},
      url={http://dx.doi.org/10.1016/j.jfa.2013.08.020},
      review={\MR{3150172}}, }

    \bib{carkanshosim:jfa14}{article}{ author={Carlsen, Toke Meier},
      author={Kang, Sooran}, author={Shotwell, Jacob}, author={Sims,
        Aidan}, title={The primitive ideals of the Cuntz--Krieger
        algebra of a row-finite higher-rank graph with no sources},
      journal={J. Funct. Anal.}, volume={266}, date={2014},
      number={4}, pages={2570--2589}, issn={0022-1236},
      review={\MR{3150171}}, doi={10.1016/j.jfa.2013.08.029}, }

    \bib{cunkri:im80}{article}{ author={Cuntz, Joachim},
      author={Krieger, Wolfgang}, title={A class of
        $C^{\ast} $-algebras and topological Markov chains},
      journal={Invent. Math.}, volume={56}, date={1980}, number={3},
      pages={251--268}, issn={0020-9910}, review={\MR{561974
          (82f:46073a)}}, doi={10.1007/BF01390048}, }

    \bib{Dixmier}{book}{ author={Dixmier, Jacques},
      title={$C\sp*$-algebras}, note={Translated from the French by
        Francis Jellett; North-Holland Mathematical Library, Vol. 15},
      publisher={North-Holland Publishing Co., Amsterdam-New
        York-Oxford}, date={1977}, pages={xiii+492},
      isbn={0-7204-0762-1}, review={\MR{0458185 (56 \#16388)}}, }

    \bib{exe:bbms08}{article}{ author={Exel, Ruy}, title={Inverse
        semigroups and combinatorial $C^\ast$-algebras},
      journal={Bull. Braz. Math. Soc. (N.S.)}, volume={39},
      date={2008}, number={2}, pages={191--313}, issn={1678-7544},
      review={\MR{2419901 (2009b:46115)}},
      doi={10.1007/s00574-008-0080-7}, }

    \bib{huekumsim:jfa11}{article}{ author={an Huef, Astrid},
      author={Kumjian, Alex}, author={Sims, Aidan}, title={A
        Dixmier-Douady theorem for Fell algebras},
      journal={J. Funct. Anal.}, volume={260}, date={2011},
      number={5}, pages={1543--1581}, issn={0022-1236},
      review={\MR{2749438 (2012i:46066)}},
      doi={10.1016/j.jfa.2010.11.011}, }

    \bib{kel:general}{book}{ author={Kelley, John~L.}, title={General
        topology}, publisher={Van Nostrand}, address={New York},
      date={1955}, }

    \bib{kumpas:nyjm00}{article}{ author={Kumjian, Alex},
      author={Pask, David}, title={Higher rank graph
        {$C^\ast$}-algebras}, date={2000}, ISSN={1076-9803},
      journal={New York J. Math.}, volume={6}, pages={1\ndash 20},
      url={http://nyjm.albany.edu:8000/j/2000/6_1.html},
      review={\MR{1745529 (2001b:46102)}}, }

    \bib{kumpasraeren:jfa97}{article}{ author={Kumjian, Alex},
      author={Pask, David}, author={Raeburn, Iain}, author={Renault,
        Jean}, title={Graphs, groupoids, and Cuntz--Krieger algebras},
      journal={J. Funct. Anal.}, volume={144}, date={1997},
      number={2}, pages={505--541}, issn={0022-1236},
      review={\MR{1432596 (98g:46083)}}, doi={10.1006/jfan.1996.3001},
    }

    \bib{kumpasrae:pjm98}{article}{ AUTHOR = {Kumjian, Alex}, AUTHOR =
      {Pask, David}, AUTHOR = {Raeburn, Iain}, TITLE =
      {Cuntz-{K}rieger algebras of directed graphs}, JOURNAL =
      {Pacific J. Math.}, FJOURNAL = {Pacific Journal of Mathematics},
      VOLUME = {184}, YEAR = {1998}, NUMBER = {1}, PAGES = {161--174},
    }

    \bib{mrw:tams96}{article}{ author={Muhly, Paul~S.},
      author={Renault, Jean~N.}, author={Williams, Dana~P.},
      title={Continuous-trace groupoid {$C\sp \ast$}-algebras. {III}},
      date={1996}, ISSN={0002-9947},
      journal={Trans. Amer. Math. Soc.}, volume={348}, number={9},
      pages={3621\ndash 3641}, review={\MR{MR1348867 (96m:46125)}}, }

    \bib{nagrez:jlms12}{article}{ author={Nagy, Gabriel},
      author={Reznikoff, Sarah}, title={Abelian core of graph
        algebras}, date={2012}, ISSN={0024-6107},
      journal={J. Lond. Math. Soc. (2)}, volume={85}, number={3},
      pages={889\ndash 908},
      url={http://dx.doi.org/10.1112/jlms/jdr073},
      review={\MR{2927813}}, }

    \bib{nagrez:pams14}{article}{ author={Nagy, Gabriel},
      author={Reznikoff, Sarah}, title={Pseudo-diagonals and
        uniqueness theorems}, date={2014}, ISSN={0002-9939},
      journal={Proc. Amer. Math. Soc.}, volume={142}, number={1},
      pages={263\ndash 275},
      url={http://dx.doi.org/10.1090/S0002-9939-2013-11756-9},
      review={\MR{3119201}}, }

    \bib{Phil05}{article}{ AUTHOR = {Phillips, N. Christopher}, TITLE
      = {Crossed products of the {C}antor set by free minimal actions
        of {$\Bbb Z^d$}}, JOURNAL = {Comm. Math. Phys.}, FJOURNAL =
      {Communications in Mathematical Physics}, VOLUME = {256}, YEAR =
      {2005}, NUMBER = {1}, PAGES = {1--42}, ISSN = {0010-3616}, CODEN
      = {CMPHAY}, MRCLASS = {46L55 (37A55 37B50)}, MRNUMBER = {2134336
        (2006g:46107)}, MRREVIEWER = {Johannes Kellendonk}, DOI =
      {10.1007/s00220-004-1171-y}, URL =
      {http://dx.doi.org/10.1007/s00220-004-1171-y}, }
	
    \bib{ren:groupoid}{book}{ author={Renault, Jean}, title={A
        groupoid approach to {\cs}-algebras}, series={Lecture Notes in
        Mathematics}, publisher={Springer-Verlag}, address={New York},
      date={1980}, volume={793}, }

    \bib{ren:imsb08}{article}{ author={Renault, Jean}, title={Cartan
        subalgebras in {$C^*$}-algebras}, date={2008},
      ISSN={0791-5578}, journal={Irish Math. Soc. Bull.}, number={61},
      pages={29\ndash 63}, review={\MR{2460017 (2009k:46135)}}, }

    \bib{ren:xx13}{unpublished}{ author={Renault, Jean},
      title={Topological amenability is a {B}orel property},
      date={2013}, note={(arXiv:1302.0636 [math.OA])}, }

    \bib{sww:dm14}{article}{ author={Sims, Aidan}, author={Whitehead,
        Benjamin}, author={Whittaker, Michael~F.}, title={Twisted
        {\cs}-algebras associated to finitely aligned higher rank
        graphs}, date={2014}, journal={Documenta Math.}, number={19},
      pages={831\ndash 866}, }

    \bib{simwil:xx15}{article}{ author={Sims, Aidan},
      author={Williams, Dana~P.}, title={The primitive ideals of some
        \'etale groupoid $C^*$-algebras}, date={2015}, number={18},
      pages={1\ndash 20},
      journal={Algebras and Representation Theory}
      note={(arXiv:1501.02302 [math.OA])}, }

    \bib{szy:ijm02}{article}{ author={Szyma{\'n}ski, Wojciech},
      title={General Cuntz--Krieger uniqueness theorem},
      journal={Internat. J. Math.}, volume={13}, date={2002},
      number={5}, pages={549--555}, issn={0129-167X},
      review={\MR{1914564 (2003h:46083)}},
      doi={10.1142/S0129167X0200137X}, }

    \bib{wil:crossed}{book}{ author={Williams, Dana~P.},
      title={Crossed products of {$C{\sp \ast}$}-algebras},
      series={Mathematical Surveys and Monographs},
      publisher={American Mathematical Society}, address={Providence,
        RI}, date={2007}, volume={134}, ISBN={978-0-8218-4242-3;
        0-8218-4242-0}, review={\MR{MR2288954 (2007m:46003)}}, }

    \bib{yang:xx14}{unpublished}{ author={Yang, Dilian},
      title={Periodic higher rank graphs revisited}, date={2014},
      note={(arXiv:1403.6848 [math.OA])}, }

    \bib{yang:xxxx}{unpublished}{ author={Yang, Dilian},
      title={Cycline subalgebras are Cartan}, date={2014}, }

  \end{biblist}
\end{bibdiv}

\end{document}